\DeclareRobustCommand{\coprod}{\mathop{\text{\fakecoprod}}}
\newcommand{\fakecoprod}{%
  \sbox0{$\prod$}%
  \smash{\raisebox{\dimexpr.9625\depth-\dp0}{\scalebox{1}[-1]{$\prod$}}}%
  \vphantom{$\prod$}%
}
\theoremstyle{plain} 
\newtheorem{thm}{Theorem}
\newtheorem{prop}[thm]{Proposition}
\newtheorem{cor}[thm]{Corollary}
\theoremstyle{definition} 
\newtheorem{example}[thm]{Example}
\numberwithin{equation}{section}
\numberwithin{thm}{section}
\renewcommand{\H}{\mathrm{H}}
\renewcommand{\d}{\mathrm{d}}
\renewcommand{\mod}{\operatorname{mod}}
\newcommand{\GG}{\mathbb{G}}
\newcommand{\TT}{\mathbb{T}}
\newcommand{\WW}{\mathbb{W}}
\newcommand{\ZZ}{\mathbb{Z}}
\newcommand{\Zset}{\mathbb{Z}}
\newcommand{\Rset}{\mathbb{R}}
\newcommand{\Cset}{\mathbb{C}}
\newcommand{\id}{\operatorname{id}}
\newcommand{\RO}{\operatorname{RO}}
\newcommand{\RU}{\operatorname{RU}}
\newcommand{\ind}{\operatorname{ind}}
\newcommand{\Ker}{\operatorname{Ker}}
\newcommand{\Cok}{\operatorname{Cok}}
\newcommand{\Aut}{\operatorname{Aut}}
\newcommand{\Hom}{\operatorname{Hom}}
\newcommand{\SO}{\operatorname{SO}}
\newcommand{\Spinc}{\operatorname{Spin}^c}
\title{\bf Bauer--Furuta invariants\\ and Galois symmetries}
\author{Markus Szymik}
\date{May 2011}
\begin{document}

\maketitle

\begin{abstract}\noindent
  The Bauer--Furuta invariants of smooth 4-manifolds are investigated from a
  functorial point of view. This leads to a definition of equivariant
  Bauer--Furuta invariants for compact Lie group actions. These are
  studied in Galois covering situations. We show that the ordinary
  invariants of all quotients are determined by the equivariant
  invariants of the covering manifold. In the case where the
  Bauer--Furuta invariants can be identified with the Seiberg-Witten
  invariants, this implies relations between the invariants in Galois
  covering situations, and these can be illustrated through elliptic
  surfaces. It is also explained that the equivariant
  Bauer--Furuta invariants potentially contain more information than
  the ordinary invariants.
\end{abstract}

\thispagestyle{empty}


\section*{\bf Introduction} 

We make a step towards a structural understanding of the Bauer--Furuta
invariants~\cite{Bauer:Refined, BauerFuruta} of smooth 4-manifolds with complex spin
structures, which refine the Seiberg-Witten invariants. The preface
of~\cite{DonaldsonKronheimer} points out the need for such a venture
in the related context of \hbox{Donaldson} invariants. Our
investigations here lead to a definition of an equivariant version of
the Bauer--Furuta invariants in the context of compact Lie group
actions. This can be related to the family version of the Bauer--Furuta
invariants defined in~\cite{Szymik:Families}, as explained in Section~7 of~{\it loc. cit.}. See also~\cite{Szymik:Stable}.

Any attempt to define an equivariant extension of the Seiberg-Witten
invariants will have to face the problem that invariant perturbations
are not generic, that equivariant transversality does not hold in
general, see for example the discussion in~\cite{RuanWang}. These
problems can be circumvented by the homotopical approach. We show how
the equivariant Bauer--Furuta invariants provide insights into Galois
covering situations for any finite Galois group, and illustrate this
by examples for groups of prime order.

Sections 1 and 2 discuss the functoriality of the monopole map and the
resulting definition of equivariant Bauer--Furuta invariants.
See~\cite{Hitchin} for motivation of the use of categorical language
in global differential geometry. Theorem~\ref{thm:functoriality} shows
that the monopole map is functorial on a certain gauge category which
takes the complex spin structures on the 4-manifolds into account.
Theorems~\ref{thm:existence} and~\ref{thm:well-related} concern the
existence of equivariant Bauer--Furuta invariants well-related to the
ordinary ones. Our discussion is kept as detailed and elementary as
possible in order to emphasise the simplicity of the homotopical
approach.

From Section 3 on the focus is on Galois covering situations. If~$G$
is a finite group, and~\hbox{$X\rightarrow X/G$} is a~$G$-covering,
the Bauer--Furuta invariants of~$X/G$ can be recovered from the
equivariant invariants of~$X$, see Theorem~\ref{thm:fixed point
  restriction}. This suggests the definition of a ghost map which
allows the comparison of the~$G$-invariants of~$X$ with the ordinary
invariants of all its quotients~$X/H$ for~\hbox{$H\leqslant G$}. In
nice situations, see Theorem~\ref{thm:hkr}, the ghost map is an
isomorphism away from the order of~$G$.

While Section~4 contains some preparatory calculations, Sections 5
and~6 study the ghost map integrally in the case when the order of~$G$
is a prime number~$p$. In the case when the complex spin structure
on~$X/G$ comes from an almost complex structure, the equivariant
invariants of~$X$ can be computed from the ordinary invariants of~$X$
and~$X/G$, i.e. from Seiberg-Witten invariants, see
Theorem~\ref{thm:relations}, which also says that the latter satisfy a
mod~$p$ congruence. This is illustrated through certain elliptic
surfaces, where these relations are equivalent to congruences between
binomial co-efficients. Finally, there may be more to the equivariant
invariants than just the ordinary invariants of the quotients: by
Theorem~\ref{thm:not injective}, the ghost map is not injective in
general.

All 4-manifolds considered will be closed and oriented. Except where
explicitly mentioned, they will be connected. The first Betti
number~$b^1$ is assumed to vanish. The notation~$e$ and~$s$ will
denote the Euler number and the signature, respectively. The circle
group will be denoted by~$\TT$, and~$L\TT$ will be its Lie algebra.

This work is based on the author's 2002 thesis~\cite{Szymik:Diss}. I would
like to take this opportunity to express my deep gratitude to Stefan
Bauer. I would also like to apologise for the delayed publication; the manuscript had been stalled at two other journals for two years each. In the meantime, the theory developed here has been successfully applied, see for example the work~\cite{Liu+Nakamura, Nakamura1, Nakamura2, Nakamura3} of Liu and Nakamura.


\section{\bf Functoriality of the monopole map}

In this section, a certain gauge category~$\mathcal G$ is defined such
that the monopole map~-- which will also be reviewed -- is functorial
on it.

\subsection{The categories}

Let~$\mathcal M$ be the following category of manifolds. The objects
are closed oriented 4-manifolds~$X$ with a Riemannian metric. The
metric will usually be omitted from the notation. The orientation and
the metric provide an $\SO(4)$-bundle~$\SO(X)$ of oriented orthonormal
frames over~$X$. A morphism~$f$ from~$X$ to~$Y$ is to be a local
diffeomorphisms for which the differential preserves the orientation
and the metric in each tangent space. Note that such an~$f$ induces an
isomorphism of~$\SO(X)$ with~$f^*\SO(Y)$ over~$X$.

Let~$\mathcal G$ be the following gauge category. The objects are
objects~$X$ of~$\mathcal M$ together with a complex spin
structure~$\sigma_X$ on~$X$, and an Hermitian connection on the
determinant line bundle of~$\sigma_X$. This line bundle will usually
be denoted by~$L(\sigma_X)$, the connection by~$A(\sigma_X)$ or
just~$A$. The reference to the connection will be omitted from the
notation so that~$(X,\sigma_X)$ will be a typical object of~$\mathcal
G$. A complex spin structure may be thought of as
a~$\Spinc(4)$-principal bundle~\hbox{$\Spinc(X,\sigma_X)$} on~$X$ and
an isomorphism
\begin{displaymath}
  \Spinc(X,\sigma_X)\times_{\Spinc(4)}\SO(4)\longrightarrow \SO(X).
\end{displaymath} 
Using this description,
morphisms~$(X,\sigma_X)\rightarrow(Y,\sigma_Y)$ in the gauge category
$\mathcal G$ are pairs~$(f,u)$ where~$f$ is a morphism~$f:X\rightarrow
Y$ in $\mathcal M$ and~$u$ is an isomorphism of the
bundle~$\Spinc(X,\sigma_X)$ with the
pullback~\hbox{$f^*\Spinc(Y,\sigma_Y)$} over~$X$ such that the induced
isomorphism of~$\SO(4)$-bundles is the one coming from~$f$.
Furthermore, the map~$u$ is to be compatible with the connections.

The forgetful functor from the gauge category~$\mathcal G$ to the
category of manifolds~$\mathcal M$ is a fibration. The fibre~$\mathcal
G(X)$ over an object~$X$ of~$\mathcal M$ is by definition the
subcategory of~$\mathcal G$ consisting of those objects which map
to~$X$ and those morphisms which map to $\id_X$.

Given an object~$(X,\sigma_X)$ of the gauge category~$\mathcal G$,
there is an exact sequence
\begin{equation}\label{universal extension}
  1\longrightarrow
  \Aut_{\mathcal G(X)}(X,\sigma_X)\longrightarrow
  \Aut_{\mathcal G}(X,\sigma_X)\longrightarrow
  \Aut_{\mathcal M}(X)
\end{equation}
of groups. The automorphism group~$\Aut_\mathcal M(X)$ is the group of
orientation preserving isometries of~$X$. This is a compact Lie group.
The map from the group~$\Aut_{\mathcal G}(X,\sigma_X)$
to~$\Aut_{\mathcal M}(X)$ need not be surjective, i.e. not every
isometry~$f$ of~$X$ needs to appear in~a morphism $(f,u)$ in~$\mathcal
G$. By definition, this will be the case if and only if~$f^*\sigma_X$
is isomorphic to~$\sigma_X$. On the other hand, if such a~$u$ exists,
it is not determined by~$f$: there are non-identity morphisms covering
the identity of~$X$. These are the elements in the
group~$\Aut_{\mathcal G(X)}(X,\sigma_X)$, which is isomorphic to~$\TT$.

\subsection{The monopole map}

The gauge category~$\mathcal G$ has been constructed so
that the objects~$(X,\sigma_X)$ have all the structure needed to
define the monopole map. The background connection~$A$ gives an
identification of the vector space $\Omega^1(L\TT)$ with the space of
Hermitian connections on~$L(\sigma_X)$ via~\hbox{$a\mapsto A+a$}.
Every such element gives a Dirac operator~$D_{A+a}$ between the two
spinor bundles~$W^\pm(\sigma_X)$. The self-dual part~$F^+_{A+a}$ of
the curvature lives in the space~$\Omega^+(L\TT)$ of
self-dual~2-forms.  The quadratic map from~$\Omega^0(W^+(\sigma_X))$
to that space will be denoted by~\hbox{$\phi\mapsto\phi^2$}. Finally,
let~$\bar\Omega^0(L\TT)$ be the quotient of the space of functions
on~$X$ modulo the constant functions.~(The class of a function will be
denoted by square brackets.) The Seiberg-Witten equations read
\begin{displaymath}
  D_{A+a}(\phi)=0\quad\mathrm{and}\quad F^+_{A+a}=\phi^2
\end{displaymath}
in this context. With the usual gauge fixing, the solutions correspond
to zeros of the monopole map
\begin{eqnarray*}
   \Omega^0(W^+(\sigma_X))\oplus\Omega^1(L\TT) &\longrightarrow&
   \Omega^0(W^-(\sigma_X))\oplus\Omega^+(L\TT)\oplus\bar\Omega^0(L\TT)\\
   (\phi,a)&\longmapsto&(D_{A+a}(\phi),F_{A+a}^+-\phi^2,[\d^*a]).
\end{eqnarray*}
The image of~$(\phi,a)$ decomposes as a sum
\begin{displaymath}
  (0,F^+_A,0)+(D_A(\phi),\d^+a,[\d^*a])+(a\phi,-\phi^2,0).
\end{displaymath}
Thus, the monopole map is polynomial in~$(\phi,a)$: the first summand
is constant, the second summand is linear, it is the
\emph{linearisation} of the monopole map, and the remaining term is
quadratic.

While an easy computation shows that the monopole map
is~$\TT$-equivariant, this will also follow from its functoriality,
which will be proven next.

\subsection{Functoriality}

Let us introduce notation for the characters introduced in the
previous subsection. For every object~$(X,\sigma_X)$ of~$\mathcal G$,
there are vector spaces
\begin{eqnarray*}
  \mathcal U(X,\sigma_X)&=&\Omega_X^0(W^+(\sigma_X))\oplus\Omega_X^1(L\TT)\\
  \mathcal V(X,\sigma_X)&=&\Omega_X^0(W^-(\sigma_X))\oplus\Omega_X^+(L\TT)\oplus\bar\Omega_X^0(L\TT),
\end{eqnarray*}
and the monopole map 
\begin{displaymath}
\mu(X,\sigma_X):\mathcal U(X,\sigma_X)\longrightarrow\mathcal V(X,\sigma_X)
\end{displaymath}
is a map between them. (We shall silently pass to suitable Sobolev
$L^2$-com\-ple\-tions from now on.) It will now be explained how all
this behaves functorially on the gauge category~$\mathcal G$. First of
all, the source and the target will be addressed.

Given a morphism~$(f,u)$ from~$(X,\sigma_X)$ to~$(Y,\sigma_Y)$ in the
gauge category~$\mathcal G$, there are linear maps
\begin{eqnarray}
  (f,u)^*&:&\mathcal{U}(Y,\sigma_Y)\longrightarrow\mathcal{U}(X,\sigma_X)
  \label{isometry 1}\\
  (f,u)^*&:&\mathcal{V}(Y,\sigma_Y)\longrightarrow\mathcal{V}(X,\sigma_X).
  \label{isometry 2}
\end{eqnarray}
The construction is as follows. Every section of the
bundle~$W^{\pm}(\sigma_Y)$ pulls back to give a section of the
pullback bundle~$f^*W^{\pm}(\sigma_Y)$. Under the isomorphism~$u$,
this corresponds to~a section of~$W^{\pm}(\sigma_X)$.  This gives
maps~\hbox{$\Omega^0_X(W^{\pm}_X)\leftarrow\Omega^0_Y(W^{\pm}_Y)$}.
Functions can be pulled back along~$f$, and constant functions pull
back to constant functions. This describes~a
map~\hbox{$\bar\Omega^0_X(L\TT)\leftarrow\bar\Omega^0_Y(L\TT)$}.
Similarly, the usual pullback of 1-forms yields~a map~\hbox{$\Omega^1_X(L\TT)\leftarrow\Omega^1_Y(L\TT)$}. Finally,~another map~\hbox{$\Omega^+_X(L\TT)\leftarrow\Omega^+_Y(L\TT)$} is induced by pulling
back 2-forms: all there is left to remark is that the pullback of~a
self-dual 2-form is self-dual as well.

This finishes the description of the maps~$(f,u)^*$. They are almost
isometries in the sense that the norms are preserved up to a factor:
the degree of~$f$. As functoriality of the maps $(f,u)^*$ is easy to
check, the following proposition summarises the discussion.

\begin{prop}\label{prop:functor_into_hilbert}
  Given a morphism~$(f,u)$ from~$(X,\sigma_X)$ to~$(Y,\sigma_Y)$ in
  the gauge category~$\mathcal G$, the isometries
  {\upshape(\ref{isometry 1})} and {\upshape(\ref{isometry 2})}
  make~$\mathcal U$ and~$\mathcal V$ into contravariant functors
  from~$\mathcal G$ to the category of Hilbert spaces and continuous
  linear maps.
\end{prop}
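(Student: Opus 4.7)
The plan is to verify the claim piece by piece, treating $\mathcal U$ and $\mathcal V$ as direct sums of simpler functors and then checking functoriality on each summand separately.

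First I would set up composition in $\mathcal G$ explicitly: given $(f,u):(X,\sigma_X)\to(Y,\sigma_Y)$ and $(g,v):(Y,\sigma_Y)\to(Z,\sigma_Z)$, the composite is $(g\circ f,\, u\circ f^*v)$, where $f^*v$ is the natural isomorphism $f^*\Spinc(Y,\sigma_Y)\to f^*g^*\Spinc(Z,\sigma_Z)=(g\circ f)^*\Spinc(Z,\sigma_Z)$ induced by $v$. With this in hand, for the function, $1$-form and self-dual $2$-form summands of $\mathcal U$ and $\mathcal V$ functoriality reduces to the classical fact that smooth pullback of differential forms is contravariantly functorial and commutes with the Hodge star operation up to the local isometry condition on $f$; the factorisation through $\bar\Omega^0$ is legitimate because $f^*$ sends constants to constants. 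For the spinor summand, the key identity $(u\circ f^*v)^{-1}\circ(g\circ f)^*\psi=u^{-1}\circ f^*(v^{-1}\circ g^*\psi)$ is a direct unwinding of how a section $\psi$ of $W^\pm(\sigma_Z)$ is pulled back through the two bundle isomorphisms, and this is exactly the composition law $((g,v)\circ(f,u))^*=(f,u)^*\circ(g,v)^*$ on that summand. The identity morphism $(\id_X,\id)$ visibly acts as the identity on each summand.

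Next I would address continuity. Because $f$ is a local diffeomorphism between closed oriented $4$-manifolds preserving the metric, it is a finite-sheeted Riemannian covering, and in particular its derivatives of all orders are bounded. Pullback along such a map is therefore a bounded linear map between the Sobolev $L^2_k$-completions of the relevant section spaces, with operator norm controlled by the degree of $f$ (explicitly, pullback multiplies the $L^2$-norm by $\sqrt{\deg f}$). This is where the phrase ``almost isometries up to the degree of~$f$'' gets justified. The bundle isomorphism $u$ is smooth and hence extends to a bounded linear isomorphism on the spinor Sobolev completion; the same holds for $v$. Composing two bounded maps yields a bounded map, so the component-wise maps define bounded linear maps $(f,u)^*\colon\mathcal U(Y,\sigma_Y)\to\mathcal U(X,\sigma_X)$ and similarly for $\mathcal V$.

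The main obstacle, such as it is, lies in the spinor summand, specifically in being careful about the order in which $f^*$ and the gauge isomorphisms $u,v$ are applied so that the composition law actually closes up. Everything else is a routine consequence of the contravariant functoriality of the pullback of tensor fields under smooth maps and the boundedness of pullback between Sobolev spaces on closed Riemannian manifolds. Once those two points are in place, the proposition follows.
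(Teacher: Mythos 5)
Your proposal is correct and follows essentially the same route as the paper, which simply constructs the pullback maps summand by summand and then declares functoriality ``easy to check''; you supply the verification the paper leaves implicit and also make explicit the boundedness argument on Sobolev completions needed for the maps to land in ``continuous linear maps''. One small notational slip: with $u\colon\Spinc(X,\sigma_X)\to f^*\Spinc(Y,\sigma_Y)$ and $f^*v\colon f^*\Spinc(Y,\sigma_Y)\to(g\circ f)^*\Spinc(Z,\sigma_Z)$, the second component of the composite morphism must be $(f^*v)\circ u$, not $u\circ f^*v$, for the sources and targets to match; the displayed identity you then check, $u^{-1}\circ f^*(v^{-1}\circ g^*\psi)$, is indeed what $((f^*v)\circ u)^{-1}\circ(g\circ f)^*\psi$ unwinds to, so the content is right and only the written order of composition needs correcting. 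Also note the paper describes the pullbacks as almost isometries ``up to a factor: the degree of $f$'' while you give $\sqrt{\deg f}$; yours is the correct factor for the $L^2$-norm (the degree itself scales the squared norm), so this is a point in your favour rather than a gap.
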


Note that, as a special case, for any object~$(X,\sigma_X)$ of the
gauge category~$\mathcal{G}$, there is an action of the
group~$\Aut_{\mathcal G}(X,\sigma_X)$ on the Hilbert spaces
$\mathcal{U}(X,\sigma_X)$ and~$\mathcal{V}(X,\sigma_X)$ by isometries.
The subgroup~\hbox{$\Aut_{\mathcal G(X)}(X,\sigma_X)\cong\TT$} acts by
scalar multiplication on the sections of the complex bundles and
trivially on the other ones.

Now that its source and target have been dealt with, the monopole map
itself will be addressed. From the maps defined above one can build an
obvious diagram, and the following proposition states that it
commutes.  The proof can be left to the reader.

\begin{prop}  
  Consider~a morphism from~$(X,\sigma_X)$ to~$(Y,\sigma_Y)$ in the gauge
  category~$\mathcal G$. If one uses the maps defined in the previous
  proposition as vertical arrows, the diagram
  \begin{center}
    \mbox{
      \xymatrix@C=50pt{
        \mathcal{U}(X,\sigma_X)\ar[r]^{\mu(X,\sigma_X)}&
        \mathcal{V}(X,\sigma_X) \\
        \mathcal{U}(Y,\sigma_Y)\ar[r]^{\mu(Y,\sigma_Y)}\ar[u]& 
        \mathcal{V}(Y,\sigma_Y). \ar[u] 
      } 
    }
  \end{center}
  commutes.
\end{prop}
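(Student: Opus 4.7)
The plan is to verify commutativity of the diagram componentwise, using the decomposition of the monopole map
\[
\mu(\phi,a) = (0,F^+_A,0) + (D_A\phi,\d^+a,[\d^*a]) + (a\phi,-\phi^2,0)
\]
into its constant, linear and quadratic parts recalled just before Proposition~\ref{prop:functor_into_hilbert}. Since $(f,u)^*$ acts componentwise on both $\mathcal{U}$ and $\mathcal{V}$, it suffices to show that each of the five constituents -- the background curvature $F^+_A$, the Dirac operator $D_A$, the operators $\d^+$ and $\d^*$, Clifford multiplication $(a,\phi)\mapsto a\phi$, and the spinorial squaring $\phi\mapsto\phi^2$ -- is natural with respect to morphisms in $\mathcal{G}$.

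First I would dispatch the purely differential-topological pieces. Because $f$ is a local diffeomorphism that is a pointwise isometry, pullback commutes with $\d$, and because $f$ is orientation-preserving, pullback commutes with the Hodge star; hence it commutes with $\d^*=-{\star}\d{\star}$ and with the self-dual projection $\d^+$. For the background curvature term, the compatibility of $u$ with the chosen connections built into the definition of a morphism in $\mathcal{G}$ gives $(f,u)^*A(\sigma_Y)=A(\sigma_X)$, so curvatures correspond, and self-duality is preserved by the previous remark.

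Next I would handle the spinorial pieces. The isomorphism $u$ identifies $\Spinc(X,\sigma_X)$ with $f^*\Spinc(Y,\sigma_Y)$ over $X$ in a way that covers the $\SO(4)$-isomorphism from $f$, so it induces isomorphisms of the associated spinor bundles $W^\pm$ and intertwines Clifford multiplication; this handles both the quadratic maps $\phi\mapsto\phi^2$ and $(a,\phi)\mapsto a\phi$. For the Dirac operator, the naturality of the Levi-Civita connection under local isometries together with the compatibility of $u$ with the connection on $L(\sigma_X)$ yields naturality of the induced spinor connection $\nabla^{A}$, and combining this with naturality of Clifford multiplication gives $D_{A(\sigma_X)}\circ(f,u)^*=(f,u)^*\circ D_{A(\sigma_Y)}$; the perturbed Dirac operator then transforms correctly because $(f,u)^*$ also transports the perturbation $a$ consistently.

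The only step that requires care is verifying the naturality of $D_A$, since the other identifications are essentially tautological once the definitions are unwound. Assembling the five verifications and summing gives commutativity of the full diagram.
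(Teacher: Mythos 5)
Your proof is correct, and it supplies exactly the routine verification that the paper elides with ``The proof can be left to the reader.'' Splitting $\mu$ into its constant, linear, and quadratic pieces as you do, and checking naturality of $F_A^+$, $D_A$, $\d^+$, $\d^*$, Clifford multiplication and spinorial squaring term by term, is the natural unwinding the author has in mind; the only genuinely non-tautological ingredient is, as you note, naturality of the (perturbed) Dirac operator, which follows from the compatibility of $u$ with the connections together with the fact that $f$ is an orientation-preserving local isometry.
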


As a special case again, for any object~$(X,\sigma_X)$ of the gauge
category~$\mathcal G$, the monopole map~$\mu(X,\sigma_X)$
is~$\Aut_{\mathcal G}(X,\sigma_X)$-equivariant. In particular, it
always is~$\TT$-equivariant.

The previous proposition might suggest the question whether or not the
collection of the monopole maps is~a natural transformation between
the functors~$\mathcal{U}$ and~$\mathcal{V}$. But this would ignore
the fact that the monopole maps are of~a different nature than the
linear maps of Proposition~\ref{prop:functor_into_hilbert}. A better
way of describing the situation is as follows. Let~$\mathcal F$ be the
following category of non-linear Fredholm maps. An object is~a
continuous map~$\mu$ between Hilbert spaces~$\mathcal U$ and~$\mathcal
V$ which satisfies the conditions of the construction
in~\cite{BauerFuruta}.  The morphisms from~\hbox{$\mu_1:\mathcal
  U_1\rightarrow\mathcal V_1$} to~\hbox{$\mu_2:\mathcal
  U_2\rightarrow\mathcal V_2$} are pairs of continuous linear
maps~\hbox{$\mathcal U_1\rightarrow\mathcal U_2$} and~\hbox{$\mathcal
  V_1\rightarrow\mathcal V_2$} between Hilbert spaces such that the
evident diagram commutes. The two preceding propositions imply the
following.

\begin{thm}\label{thm:functoriality}
  The monopole maps gives rise to~a contravariant functor from the
  gauge category~$\mathcal G$ to the category~$\mathcal F$ of
  non-linear Fredholm maps.
\end{thm}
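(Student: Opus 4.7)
The theorem splits cleanly into three assertions: that each monopole map is an object of~$\mathcal F$, that each pair of pullbacks is a morphism of~$\mathcal F$, and that this assignment respects identities and composition. The two preceding propositions already take care of the latter two, so the only genuine content is the object-level verification.

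First I would check that for every object~$(X,\sigma_X)$ of~$\mathcal G$ the monopole map~$\mu(X,\sigma_X)$ satisfies the hypotheses of~\cite{BauerFuruta} and thus defines an object of~$\mathcal F$. The decomposition $\mu = \text{constant} + \text{linearisation} + \text{quadratic}$ displayed earlier is the starting point. After passing to the announced Sobolev $L^2$-completions, the linearisation $(\phi,a)\mapsto(D_A\phi,\d^+a,[\d^*a])$ is a bounded linear Fredholm map: the Dirac operator is Fredholm on a closed Riemannian 4-manifold, and~$(\d^+,\d^*)$ is the standard elliptic complex of Hodge theory (using~$b^1=0$ to control the cokernel). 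The quadratic terms~$(a\phi,-\phi^2)$ are compact between the relevant Sobolev spaces by Rellich--Kondrachov together with the standard Sobolev multiplication estimates in dimension~$4$. These are exactly the conditions under which Bauer and Furuta construct their stable homotopy class, and this step is where the actual analytic input of the theorem sits; it is the main obstacle, but nothing beyond the standard setup from Seiberg--Witten theory is required.

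Second, for a morphism~$(f,u)\colon(X,\sigma_X)\to(Y,\sigma_Y)$ in~$\mathcal G$, I would take the pair of pullbacks~$((f,u)^*,(f,u)^*)$ from~(\ref{isometry 1}) and~(\ref{isometry 2}) as the candidate morphism in~$\mathcal F$ from~$\mu(Y,\sigma_Y)$ to~$\mu(X,\sigma_X)$. Proposition~\ref{prop:functor_into_hilbert} supplies continuity and linearity, while the second of the two preceding propositions supplies commutativity of the square; together these are precisely the defining data of a morphism of~$\mathcal F$. Functoriality of the whole assignment is then inherited componentwise from the contravariant functors~$\mathcal U$ and~$\mathcal V$ of Proposition~\ref{prop:functor_into_hilbert}: identities go to identities, and~$((g,v)\circ(f,u))^*=(f,u)^*\circ(g,v)^*$ holds in each of the two slots. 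The reversal of arrows is precisely the statement of contravariance, completing the proof.
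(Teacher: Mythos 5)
Your proof is correct and follows essentially the same route as the paper: the paper states the theorem immediately after the two propositions with only the remark that they imply it, and you correctly identify those two propositions as supplying the morphism-level data and functoriality, while the object-level claim (that each $\mu(X,\sigma_X)$ lies in $\mathcal F$) is, as in the paper, deferred to the Bauer--Furuta analysis. One small point worth adding for completeness: the conditions defining $\mathcal F$ also require that preimages of bounded sets under $\mu$ are bounded (the a priori estimate coming from the Weitzenb\"ock formula), which your sketch of the object-level check does not explicitly mention, though you do gesture at it by saying the map meets "exactly the conditions" of~\cite{BauerFuruta}.
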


Composition with the construction in \cite{BauerFuruta}, which is
functorial as well, gives~a functor into~a category of stable homotopy
classes of maps. This will be explained in the next section.


\section{\bf Equivariant Bauer--Furuta invariants}

In this section, after a brief review of the ordinary Bauer--Furuta
invariants, an equivariant extension of them is defined
and commented on.

The notation from equivariant stable homotopy theory used here will be
fairly standard, see~\cite{LMS} for example. For a compact Lie
group~$\GG$ and a~$\GG$-repre\-sen\-ta\-tion~$V$, the
one-point-compactification will be denoted by~$S^V$. Given two finite
pointed $\GG$-CW-complexes~$M$ and~$N$, the group of~$\GG$-equivariant
stable maps from~$M$ to~$N$ with respect to a~$\GG$-universe~$\mathcal
V$ will be denoted by~$[M,N]^\GG_\mathcal V$. If the universe is
understood, the reference to it will be omitted.

\subsection{Ordinary Bauer--Furuta invariants}

The definition of the invariants from the monopole map rests on the
following construction. Let~$\mathcal{U}$ and~$\mathcal{V}$ be two
Hilbert spaces on which~$\GG$ acts via isometries. One may assume
that~$\mathcal{V}$ is~a $\GG$-universe, although that is not strictly
necessary. Let~$\mu$ be~a~$\GG$-map from~$\mathcal{U}$
to~$\mathcal{V}$ which admits~a
decomposition~\hbox{$\mu=\lambda+\kappa$} into~a sum of~a linear
map~$\lambda$ which has finite-dimensional kernel and cokernel, and~a
continuous map~$\kappa$ which maps bounded sets into compact sets.
Furthermore, it will be required that pre-images under~$\mu$ of
bounded sets are bounded. Let~$\Cok(\lambda)$ denote the orthogonal
complement of the image of~$\lambda$ in~$\mathcal{V}$. In this
situation,~an equivariant stable homotopy class
in~\hbox{$[S^{\Ker(\lambda)},S^{\Cok(\lambda)}]^{\GG}_{\mathcal{V}}$}
can be defined, see~\cite{BauerFuruta}.

The monopole maps fit into this framework, such that this construction
can be used to define invariants. Let~$(X,\sigma_X)$ denote an object
of~$\mathcal G$. The source~\hbox{$\mathcal U=\mathcal U(X,\sigma_X)$}
and the target~\hbox{$\mathcal V=\mathcal V(X,\sigma_X)$} of the monopole map
are~$\TT$-universes, containing only representations isomorphic
to~$\Rset$ or~$\Cset$. It is shown in~\cite{BauerFuruta} that the
monopole map~$\mu=\mu(X,\sigma_X)$ and its
linearisation~\hbox{$\lambda=\lambda(X,\sigma_X)$} satisfy the
assumptions required by the construction mentioned in the preceding
paragraph. This allows to define an element~$m(X,\sigma_X)$ in
$[S^{\Ker(\lambda)},S^{\Cok(\lambda)}]^\TT_\mathcal V,$ the
\textit{Bauer--Furuta invariant} of~$(X,\sigma_X)$. It is independent
of the metric and the reference connection.

\subsection{A first application of functoriality}

Let~$(X,\sigma_X)$ be an object of the gauge category~$\mathcal G$.
Let a compact Lie group~$G$ act on~$X$ preserving the orientation. One
may assume that $G$ also preserves the metric. Thus, if the action is
faithful, the group~$G$ is a subgroup of~$\Aut_\mathcal M(X)$. The
complex spin structure is called~\textit{$G$-invariant} if for all~$g$
in~$G$ the complex spin structure~$g^*\sigma_X$ is isomorphic
to~$\sigma_X$. (This is the case if and only if~$G$ is in the image of
the rightmost map in the exact sequence (\ref{universal extension}).)
In this case, there is even an
isomorphism~\hbox{$g^*\sigma_X\cong\sigma_X$} which respects the
reference connection.

If~$\sigma_X$ is a~$G$-invariant complex spin structure on~$X$, the exact
sequence (\ref{universal extension}) gives rise to an extension
\begin{equation}\label{GG extension}
  1\longrightarrow\TT\longrightarrow\GG\longrightarrow G\longrightarrow1
\end{equation}
of~$G$ by~$\TT$. Since~$\GG$ is a subgroup of~$\Aut_{\mathcal
  G}(X,\sigma_X)$, functoriality implies the existence
of~a~$\GG$-action on the source and the target of the monopole map
such that it will not only be~$\TT$-equivariant but in
fact~$\GG$-equivariant. The construction in~\cite{BauerFuruta} then
allows to define an equivariant invariant.

\begin{thm}\label{thm:existence}
  If a compact Lie group~$G$ acts on~$X$, and if the complex spin
  structure~$\sigma_X$ is~$G$-invariant, the monopole map defines an
  equivariant invariant~$m_G(X,\sigma_X)$ in the
  group~$[S^{\Ker(\lambda)},S^{\Cok(\lambda)}]^{\GG}$.
\end{thm}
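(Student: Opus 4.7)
The plan is to derive Theorem~\ref{thm:existence} directly from the functoriality statement of Theorem~\ref{thm:functoriality} combined with the equivariant version of the Bauer--Furuta construction reviewed in the preceding subsection. First, using the hypothesis that $\sigma_X$ is $G$-invariant, the group $G$ lies in the image of $\Aut_{\mathcal G}(X,\sigma_X) \to \Aut_{\mathcal M}(X)$, and the exact sequence~(\ref{universal extension}) gives the central extension~(\ref{GG extension}), realising $\GG$ as a compact Lie subgroup of $\Aut_{\mathcal G}(X,\sigma_X)$. Applying Proposition~\ref{prop:functor_into_hilbert} restricted to this subgroup yields isometric $\GG$-actions on $\mathcal U(X,\sigma_X)$ and $\mathcal V(X,\sigma_X)$, and the commutative square from the second proposition of Section~1 specialises to say that $\mu(X,\sigma_X)$ intertwines these two actions, i.e. is a $\GG$-map.

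Next I would verify that $\mu$ with the $\GG$-action satisfies the hypotheses of the construction recalled from~\cite{BauerFuruta}: a decomposition $\mu = \lambda + \kappa$ with $\lambda$ linear Fredholm, $\kappa$ sending bounded sets to relatively compact ones, and $\mu$ proper on bounded sets. All three properties are already known in the $\TT$-equivariant setting from~\cite{BauerFuruta}; the new content is only that $\lambda$ and $\kappa$ respect the enlarged $\GG$-action, which is immediate from the functoriality of the individual summands of the monopole map (Dirac operator, curvature, the quadratic term $\phi \mapsto \phi^2$) under pullback along $(f,u) \in \Aut_{\mathcal G}(X,\sigma_X)$. In particular, $\Ker(\lambda)$ and $\Cok(\lambda)$ inherit the structure of finite-dimensional $\GG$-representations, so $S^{\Ker(\lambda)}$ and $S^{\Cok(\lambda)}$ are pointed $\GG$-CW-complexes.

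The $\GG$-universe condition on $\mathcal V(X,\sigma_X)$ is the step deserving the most care: for $\TT$ the source and target already are universes containing only $\Rset$ and $\Cset$ as summands, but for the larger group $\GG$ one needs that every irreducible $\GG$-representation occurring in the spinor and form bundles appears infinitely often in $\mathcal V$, or else that one stabilises by a $\GG$-universe containing $\mathcal V$. This can be arranged because $\GG$ is compact and the relevant representations are built from the natural action on spinors and forms, which decompose into finite-dimensional $\GG$-isotypical components with infinite multiplicities in $L^2$. Granting this, the Bauer--Furuta construction produces a well-defined stable homotopy class $m_G(X,\sigma_X) \in [S^{\Ker(\lambda)}, S^{\Cok(\lambda)}]^{\GG}$.

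Independence of the choices of $G$-invariant metric and reference connection would be handled as in the non-equivariant case: the space of such choices is convex, hence contractible, and the resulting homotopy of monopole maps is $\GG$-equivariant at each stage, yielding a $\GG$-equivariant stable homotopy between the associated classes.
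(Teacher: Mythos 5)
Your proposal follows essentially the same route as the paper: realise $\GG$ as a subgroup of $\Aut_{\mathcal G}(X,\sigma_X)$ via the extension (\ref{GG extension}) pulled back from (\ref{universal extension}), use the functoriality of $\mathcal U$, $\mathcal V$, and $\mu$ established in Section~1 to obtain a $\GG$-equivariant monopole map, and then feed this into the Bauer--Furuta construction. The paper compresses the verification of the Fredholm and compactness hypotheses into a reference to~\cite{BauerFuruta}, so your more explicit check that $\lambda$ and $\kappa$ are $\GG$-equivariant is a welcome, if minor, elaboration rather than a different method.

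The one place where your argument and the paper diverge is the universe issue, and there the paper is the more careful of the two. You assert that the relevant $\GG$-isotypes appear with infinite multiplicity in $\mathcal V$, so that $\mathcal V$ is already a $\GG$-universe (or close to it). The paper explicitly states the opposite: $\mathcal V$ ``need not be a $\GG$-universe in the first place,'' and repairs this by \emph{enlarging} both the source and the target by $\GG$-representations on which the map is extended to be the identity; it then remarks that this correction is unnecessary precisely in the case of free actions of finite groups, implying it \emph{is} needed in general (e.g.\ when the isotropy is positive-dimensional, isotypes can be missing or appear with finite multiplicity). Your fallback clause --- stabilising by a $\GG$-universe containing $\mathcal V$ --- does rescue the argument, but the leading claim about automatic infinite multiplicities should be dropped; as written it is not correct in the generality the theorem covers.
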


The class~$m_G(X,\sigma_X)$ does not depend on the~$G$-invariant
metric, as in the non-equivariant case. Neither matters the reference
connection. As in the non-equivariant case, the universe in question
should be~$\mathcal{V}$. While this need not be~a~$\GG$-universe in
the first place, it will be after suitably enlarging the source and
the target of the monopole map with~$\GG$-representations on which the
map is defined to be the identity.~A situation in which this
correction is not necessary is that of free actions of finite groups,
which is the subject of the following sections.

\subsection{Forgetful maps}

As soon as equivariant invariants have been defined, one may discuss
the amount of information they contain -- as compared to the ordinary
invariants. It is obvious from the definition that one gets the
ordinary invariant~$m(X,\sigma_X)$ back from the equivariant
invariant~$m_G(X,\sigma_X)$ by forgetting the equivariance coming from
the non-trivial elements of~$G$.

\begin{thm}\label{thm:well-related}
  The forgetful map 
  \begin{displaymath}
    [S^{\Ker(\lambda)},S^{\Cok(\lambda)}]^{\GG} \longrightarrow
    [S^{\Ker(\lambda)},S^{\Cok(\lambda)}]^{\TT}    
  \end{displaymath}
  sends the equivariant invariant $m_G(X,\sigma_X)$ to the ordinary
  Bauer--Furuta invariant~$m(X,\sigma_X)$.
\end{thm}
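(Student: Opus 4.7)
The plan is to observe that both invariants are produced by applying the same Bauer--Furuta finite-dimensional approximation to the same underlying map $\mu(X,\sigma_X)$ with the same linearisation $\lambda(X,\sigma_X)$; the only thing that differs is which group is recorded as acting. First I would note that, by the functoriality established in Section~1 together with the discussion following~(\ref{GG extension}), the $\TT$-actions on $\mathcal{U}(X,\sigma_X)$ and $\mathcal{V}(X,\sigma_X)$ used in the definition of $m(X,\sigma_X)$ are literally the restrictions along $\TT\hookrightarrow\GG$ of the $\GG$-actions used in the definition of $m_G(X,\sigma_X)$. In particular, $\mu$ and $\lambda$ are $\GG$-equivariant, hence a fortiori $\TT$-equivariant, and the splitting $\mu=\lambda+\kappa$ is a $\GG$-splitting which restricts to the $\TT$-splitting used for the ordinary invariant.

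Second, I would recall that the Bauer--Furuta construction proceeds by choosing a finite-dimensional subspace $\mathcal{V}'\subset\mathcal{V}$ containing $\Cok(\lambda)$ and invariant under the acting group, setting $\mathcal{U}'=\lambda^{-1}(\mathcal{V}')$, and producing from a suitable deformation of $\mu|_{\mathcal{U}'}$ an equivariant pointed map $S^{\mathcal{U}'}\rightarrow S^{\mathcal{V}'}$ whose stable class, after the standard identifications using $\Ker(\lambda)$ and $\Cok(\lambda)$, represents the invariant. Any $\GG$-invariant choice of $\mathcal{V}'$ is automatically $\TT$-invariant, so the same $\mathcal{V}'$ and $\mathcal{U}'$ may be used in both constructions. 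The finite-dimensional pointed map produced is then literally the same point-set map; it is simply regarded as $\GG$-equivariant in the definition of $m_G(X,\sigma_X)$ and as $\TT$-equivariant in the definition of $m(X,\sigma_X)$.

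Third, the forgetful map in the statement is by definition the operation that takes the $\GG$-equivariant stable class of a $\GG$-map and remembers only its underlying $\TT$-equivariant stable class. Applied to the $\GG$-equivariant finite-dimensional approximation above, it yields the $\TT$-equivariant stable class of the same map, which by construction is $m(X,\sigma_X)$. This gives the claimed identity.

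The one point that requires attention, and which I would flag as the only real issue, is the bookkeeping of universes: one must verify that the exhaustion of $\mathcal{V}$ by $\GG$-invariant finite-dimensional subspaces remains cofinal when $\mathcal{V}$ is regarded as a $\TT$-universe, and that the identifications of $S^{\Ker(\lambda)}$ and $S^{\Cok(\lambda)}$ as sphere objects in the respective equivariant stable categories are compatible with restriction along $\TT\hookrightarrow\GG$. Both hold because restriction of representations along a closed subgroup inclusion preserves cofinality of $\GG$-invariant flags and commutes with one-point compactification; independence of the chosen $\mathcal{V}'$, already used to make $m_G$ and $m$ well-defined, then guarantees that the image under the forgetful map is independent of the cofinal sequence, completing the argument.
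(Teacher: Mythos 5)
Your argument is correct and is exactly the content of the paper's own (one-line) justification: the paper notes that the claim ``is obvious from the definition,'' namely that $m_G(X,\sigma_X)$ and $m(X,\sigma_X)$ are built by the same Bauer--Furuta construction from the same $\mu$ and $\lambda$, with the $\TT$-action on source and target being the restriction of the $\GG$-action, so the forgetful map sends one to the other by construction. You have simply spelled out the definitional unwinding (including the harmless cofinality check for the universe) that the paper elides.
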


While this is a trivial observation, it is important from a
structural point of view, since it shows that the equivariant version
of the invariant is really an extension of the ordinary invariant.

\subsection{Two elementary examples}\label{sec:examples}

Given any 4-manifold~$X$, there are two elementary ways of producing a
4-manifold with~$G$-action. On the one hand,~$X$ itself carries the trivial
action. On the other hand, if~$G$ is finite, the group~$G$ acts on the
(non-connected) 4-manifold~$G\times X$ by permuting the components. The
equivariant invariants for these two cases will be discussed now.

In these two examples, the complex spin structure~$\sigma_X$ on~$X$
will not only be~$G$-invariant, but even {\it~$G$-equivariant}. This
means that the short exact sequence~(\ref{GG extension}) splits and
that a splitting has been chosen: $G$ is a subgroup of the
group~$\Aut_{\mathcal G}(X,\sigma_X)$.

\begin{example}
  If~$G$ acts trivially on~$X$, every complex spin
  structure~$\sigma_X$ is~$G$-invariant. One can also endow~$\sigma_X$
  with the trivial $G$-action. As a consequence, the group~$\GG$ can
  be identified with the product $\TT\times G$. Since~$G$ acts
  trivially on the source and the target of the monopole map, the
  forgetful map can be split by the map which is the identity on
  representatives. In particular, the equivariant
  invariant~$m_G(X,\sigma_X)$ is just the ordinary
  invariant~\hbox{$m(X,\sigma_X)$}, regarded as~a~$\GG$-map.
\end{example}

\begin{example}\label{ex:2}
  Let now~$G$ be finite. If one chooses a complex spin
  structure~$\sigma_X$ on~$X$, this gives a complex spin
  structure~$G\times\sigma_X$ on $G\times X$. There is an
  obvious~$G$-action on~$G\times\sigma_X$, so that the symmetry
  group~$\GG$ can be identified with the product~$\TT\times G$ as
  above.  The invariant of a sum is the smash product of the
  invariants of the summands, see~\cite{Bauer:Sum}. It follows for the
  (non-connected)~4-manifold
  \begin{displaymath}
    (G\times X,G\times\sigma_X)=\coprod_{g\in G}(X,\sigma_X)
  \end{displaymath}
  that the ordinary invariant~$m(G\times X,G\times\sigma_X)$ is the
  smash product
  \begin{equation}\label{first_smash}
    \bigwedge_{g\in G}m(X,\sigma_X)\in \biggl[\;\bigwedge_{g\in
      G}S^{\Ker(\lambda)},\bigwedge_{g\in G}S^{\Cok(\lambda)}\;\biggr]^{\TT}.
  \end{equation}
  The functor~$\wedge_{g\in G}$ really takes values in~$(\TT\times
  G)$-spaces and \hbox{$(\TT\times
    G)$}-maps. See~\cite{tomDieck:Trans} for the construction and its
  properties. For example, as~$m(X,\sigma_X)$ is~a map between
  compactified representations, the map~(\ref{first_smash}) is a map
  between the induced representations:
  \begin{displaymath}
    \bigwedge_{g\in G}S^{\Ker(\lambda)}\cong S^{\ind_1^G(\Ker(\lambda))}
  \end{displaymath}
  and similarly for~$\Cok(\lambda)$. Since~$G$ acts on~$G\times X$ by
  permuting the factors, the equivariant invariant~$m_G(G\times
  X,G\times\sigma_X)$ is also~given by~(\ref{first_smash}), but
  considered as a~$(\TT\times G)$-map.
\end{example}

The two examples have a common flavour. They show how to relate
certain constructions in equivariant stable homotopy theory to
constructions of~4-manifolds. The main theorem in \cite{Bauer:Sum} has
the same flavour, but is much deeper: it relates the smash product (or
the composition) to connected sums. Here, it has been shown how, for a
stable~$\TT$-map~$f$ which is realised by~a~4-manifold, also the
stable~$(\TT\times G)$-maps~$f$ and -- if~$G$ is finite --
also~$\wedge_{g\in G}f$ can be realised.

\subsection{Reduction mod~\texorpdfstring{$\TT$}{T}}

As for the structure of the
groups
\begin{displaymath}
  [S^{\Ker(\lambda)},S^{\Cok(\lambda)}]^{\GG}
\end{displaymath}
in which the
equivariant invariants live, in nice situations one can pass to an
isomorphic group of $G$-equivariant maps. For that, it will be
necessary to assume that certain positivity conditions are satisfied.
On the one hand, there is to be an actual $\GG$-representation~$V$
such that
\begin{equation}\label{eq:first_condition}
  [V]=\ind_{\GG}(D_A)
\end{equation}
in~$\RO(\GG)$. (In particular, the index must not have negative
dimension. But this is not sufficient.) The projective space~$\Cset
P(V)$ is~a~$G$-space, and the notation~$\Cset P(V)_+$ will be used
for~$\Cset P(V)$ with a disjoint $G$-fixed base-point added. Let us
write~$W$ for the~$G$-representation $\H^+(X)$. On the other hand, it
will have to be assumed that
\begin{equation}\label{eq:second_condition}
  \dim_\Rset(W^{G})\geqslant 2
\end{equation}
holds. In particular, one may choose~a complement~$W-1$ of~a trivial
subrepresentation in~$W$.  The following can be proven as
in~\cite{BauerFuruta}, taking care of the additional $G$-action.

\begin{prop}\label{reduction}
  Under the two conditions {\upshape (\ref{eq:first_condition})} and
  {\upshape (\ref{eq:second_condition})}, there is an isomorphism
  \begin{displaymath}
    [S^{\Ker(\lambda)},S^{\Cok(\lambda)}]^{\GG} \cong [\Cset P(V)_+,S^{W-1}]^G
  \end{displaymath}
  of groups.
\end{prop}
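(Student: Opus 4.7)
The plan is to adapt Bauer and Furuta's original reduction, which replaces equivariant stable maps between spheres by stable maps from a projective space, upgraded so that the residual $G$-action from the extension $1\to\TT\to\GG\to G\to 1$ is preserved throughout.

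First, using $b^1(X)=0$, identify $\Ker(\lambda)=\Ker(D_A)$ and $\Cok(\lambda)=\Cok(D_A)\oplus W$ as $\GG$-representations, where $W=H^+(X)$ carries the trivial $\TT$-action. Condition~(\ref{eq:first_condition}) provides a $\GG$-representation $U$ with $\Ker(D_A)\oplus U\cong V\oplus\Cok(D_A)\oplus U$ as $\GG$-modules. Combined with the equivariant suspension isomorphism $[S^A,S^B]^\GG\cong[S^{A+C},S^{B+C}]^\GG$, this reduces the left-hand side of the proposition to $[S^V,S^W]^\GG$.

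Next, exploit that $\TT\subset\GG$ acts on $V$ by scalar multiplication, so that the $\TT$-fixed subset of $S^V$ is $S^0=\{0,\infty\}$, while $\TT$ acts trivially on $W$. The $\GG$-equivariant cofibration
\begin{displaymath}
  S^0\longrightarrow S^V\longrightarrow S^V/S^0\simeq\Sigma S(V)_+
\end{displaymath}
comes from the radial decomposition $V\setminus\{0\}\cong S(V)\times\Rset_{>0}$ together with one-point compactification. Since $\TT$ acts freely on $S(V)$ with quotient the $G$-space $S(V)/\TT=\Cset P(V)$, descent along $\TT$ gives
\begin{displaymath}
  [\Sigma S(V)_+,S^W]^\GG\cong[\Cset P(V)_+,S^{W-1}]^G,
\end{displaymath}
where the trivial 1-dimensional $G$-direction absorbed by the desuspension on the right is available by condition~(\ref{eq:second_condition}). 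The associated Puppe long exact sequence displays $[S^V,S^W]^\GG$ sandwiched between $[\Sigma S^0,S^W]^G$ and $[S^0,S^W]^G$.

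The main obstacle is showing that these two boundary terms vanish, so that the middle arrow in the Puppe sequence becomes the desired isomorphism. This is where condition~(\ref{eq:second_condition}) enters in full strength: the tom Dieck splitting writes each such $G$-equivariant stable homotopy group as a direct sum, over conjugacy classes of subgroups $H\leqslant G$, of stable homotopy groups of fixed-point spheres $S^{W^H}$. Since $W^H\supseteq W^G$ for every $H\leqslant G$, the hypothesis $\dim W^G\geqslant 2$ forces each $S^{W^H}$ to be at least simply connected, and a quick dimension count shows that the relevant summands all vanish. Throughout, one must check that every identification descends compatibly from $\TT$- to $G$-equivariant data, but this is a mechanical verification; composing the two reductions produces the stated isomorphism.
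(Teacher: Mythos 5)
The paper does not give a proof of Proposition~\ref{reduction} beyond the remark that it ``can be proven as in \cite{BauerFuruta}, taking care of the additional $G$-action,'' and your proposal is a faithful reconstruction of exactly that argument, upgraded to carry the residual $G$-symmetry through every step. You correctly identify $\Ker(\lambda)=\Ker(D_A)$ and $\Cok(\lambda)=\Cok(D_A)\oplus\H^+(X)$ (using $b^1=0$), cancel via condition~(\ref{eq:first_condition}) to reduce the source to $[S^V,S^W]^\GG$, use the $\GG$-cofibration $S(V)_+\to S^0\to S^V$ together with the free $\TT$-action on $S(V)$ and the Adams isomorphism (whose one-dimensional adjoint shift is absorbed by the suspension, giving the $W-1$), and show the two boundary terms in the Puppe sequence vanish via a tom Dieck splitting argument powered by $\dim W^G\geqslant 2$. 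This is precisely the Bauer--Furuta reduction with $G$-equivariance preserved, so your route matches the paper's. One small point of care: the boundary terms of the Puppe sequence are a priori $[S^0,S^W]^\GG$ and $[S^1,S^W]^\GG$ rather than their $G$-equivariant counterparts; passing from $\GG$ to $G$ for $\TT$-trivial source and target is legitimate but itself rests on the same inflation/quotient adjunctions you invoke, and it would be cleaner to run the vanishing argument directly for $\GG$ (the tom Dieck splitting for the compact Lie group $\GG$ still produces Thom spectra of bundles with fibre $W^H\supseteq W^\GG=W^G$ of dimension $\geqslant 2$, so the same connectivity estimate applies).
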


The requirement~(\ref{eq:first_condition}) will be met in relevant
situations. On the one hand, geo\-me\-try can come for help: For
example, if~$G$ acts on a complex surface~$X$ via holomorphic maps,
kernel and cokernel of the Dirac operator can be interpreted and
(ideal\-ly) computed using coherent cohomology. If the cokernel
vanishes, the kernel serves as~$V$. On the other hand, if~$G$ is
finite, and~$X$ is a~$G$-Galois covering of~$Y$, and~$\sigma_Y$ has
non-negative index, then~(\ref{eq:first_condition}) is satisfied for
the pullback of~$\sigma_Y$ to~$X$. This will be the situation to which
we turn next.


\section{\bf Galois symmetries}

In the previous section, an invariant has been defined for
4-manifolds~$X$ with an action of a compact Lie group~$G$ and
a~$G$-invariant complex spin structure~$\sigma_X$. From this section
on, the group~$G$ will be finite and act freely on~$X$, adding the
aspect that the quotient~$X/G$ is a 4-manifold as well. This suggests
to compare the invariants of~$X$ with those of~$X/G$. The problem to
face is that there need not be a compatible complex spin structure on
the quotient; and if there is, it need not be unique. The following
proposition clarifies the situation.

\begin{prop} 
  Let~$\sigma_X$ be a~$G$-invariant complex spin structure on~$X$, and
  let~$\GG$ be the corresponding extension of~$G$ by~$\TT$. For any
  subgroup~$H$ of~$G$ there are canonical bijections between the
  following sets.
  \begin{list}{}{\leftmargin20pt\labelwidth20pt\itemsep5pt\topsep5pt}
  \item[\upshape(1)] The set of~$H$-actions~$j$ on~$\sigma_X$
    compatible with the action of~$H$ on~$X$.
  \item[\upshape(2)] The set of subgroups~$H(j)$ of~$\GG$ which map
    isomorphically to~$H$ under the projection from~$\GG$ to~$G$.
  \item[\upshape(3)] The set of isomorphism classes of complex spin
    structures~$\sigma_{X/H}(j)$ on~$X/H$ such that the pullback along
    the quotient map~\hbox{$q:X\rightarrow X/H$} is isomorphic
    to~$\sigma_X$.
  \end{list}
\end{prop}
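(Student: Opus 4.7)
The plan is to construct natural assignments realising the chain $(1)\to(2)\to(3)\to(1)$ and to verify that the resulting compositions are the identity. The bijection $(1)\leftrightarrow(2)$ is essentially formal: by the very definition of the gauge category $\mathcal G$, an $H$-action $j$ on $\sigma_X$ compatible with the $H$-action on $X$ amounts to a group homomorphism $j:H\to\Aut_{\mathcal G}(X,\sigma_X)$ whose composition with the projection to $\Aut_{\mathcal M}(X)$ is the given inclusion $H\hookrightarrow G\leqslant\Aut_{\mathcal M}(X)$. Injectivity of the inclusion forces $j$ to be injective, and because each $j(h)$ covers an element of $G$, the homomorphism $j$ factors through the subgroup $\GG$ appearing in (\ref{GG extension}). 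The image $H(j):=j(H)$ then maps isomorphically onto $H$ under $\GG\to G$, producing the element of (2). Conversely, any subgroup of $\GG$ mapping isomorphically onto $H$ determines a unique such $j$ by inverting this isomorphism, and the two assignments are manifestly mutually inverse.

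For $(2)\to(3)$, view $H(j)\leqslant\GG\leqslant\Aut_{\mathcal G}(X,\sigma_X)$ as acting on the total space of the principal $\Spinc(4)$-bundle $\Spinc(X,\sigma_X)$ by bundle automorphisms that cover the free $H$-action on $X$, commute with the right $\Spinc(4)$-action, and preserve the reference connection on the determinant line bundle. The quotient $\Spinc(X,\sigma_X)/H(j)$ is therefore a principal $\Spinc(4)$-bundle on $X/H$; the structural isomorphism $\Spinc(X,\sigma_X)\times_{\Spinc(4)}\SO(4)\to\SO(X)$ and the connection both descend to $X/H$, producing a complex spin structure $\sigma_{X/H}(j)$ whose pullback along $q:X\to X/H$ is tautologically identified with $\sigma_X$.

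For the reverse direction $(3)\to(2)$, choose a representative $\sigma_{X/H}$ of the given isomorphism class together with an isomorphism $\phi:q^*\sigma_{X/H}\to\sigma_X$. Transport of the canonical deck-transformation $H$-action on $q^*\sigma_{X/H}$ through $\phi$ yields an $H$-action $j_\phi$ on $\sigma_X$, and hence a subgroup $H(j_\phi)\leqslant\GG$. Independence of the choices is the one genuinely non-formal point: any other isomorphism $\phi'$ differs from $\phi$ by an element of $\Aut_{\mathcal G(X)}(X,\sigma_X)\cong\TT$, and this $\TT$ is central in $\GG$, so that conjugation by it does not alter $H(j_\phi)$; the freedom in choosing the representative $\sigma_{X/H}$ is absorbed in the same way. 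Mutual inverseness of the two compositions then reduces to the standard descent fact that, for a regular free cover with deck group $H$, the passage $E\mapsto q^*E/H$ is inverse to pullback and intertwines the canonical deck action with the originally given one. The main (and minor) obstacle is precisely this centrality of $\TT$ in $\GG$, which rests on the connectedness assumption underlying the identification $\Aut_{\mathcal G(X)}(X,\sigma_X)\cong\TT$ recalled after (\ref{universal extension}).
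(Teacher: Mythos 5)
Your proposal is correct and follows essentially the same route as the paper's (very terse) proof: the bijection between (1) and (2) is the tautology that compatible $H$-actions on $\sigma_X$ are the same as homomorphisms $H\to\Aut_{\mathcal G}(X,\sigma_X)$ lifting the inclusion $H\hookrightarrow G$, hence landing in $\GG$ and splitting the projection $\GG\to G$ over $H$; the passage between (1)/(2) and (3) is quotient/pullback along the free $H$-cover. Your explicit remark that centrality of $\TT$ in $\GG$ (which, as you note, rests on the identification $\Aut_{\mathcal G(X)}(X,\sigma_X)\cong\TT$ and ultimately on centrality of $\TT$ inside $\Spinc(4)$) is what makes the assignment $(3)\to(2)$ independent of the chosen representative and of the chosen isomorphism $q^*\sigma_{X/H}\cong\sigma_X$ is a genuine detail that the paper leaves implicit, and it is the right thing to flag.
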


\begin{proof}
  The pullbacks of the complex spin structures have an obvious action.
  Conversely, given an action, one may pass to the quotient. This
  gives the bijection between the sets in~(1) and~(3). The bijection
  between the sets in~(1) and~(2) is given by the fact that actions
  on~$\sigma_X$ correspond to group homomorphism into the automorphism
  group, which is~$\GG$ in this case.
\end{proof}
  
The notation~$J_H(\sigma_X)$ will be used for any of the sets from the
proposition.

\subsection{A second application of functoriality}

Let us fix a subgroup~$H$ and an element~$j$ in~$J_H(\sigma_X)$.
Let~$\sigma_{X/H}$ be the induced complex spin structure on~$X/H$. The
first aim of this section is the identification of the monopole map~--
and therefore the invariants -- for the pair~$(X/H,\sigma_{X/H})$ with
fixed point data of $(X,\sigma_X)$. Note that there is still some
symmetry downstairs on~$X/H$ which can be taken into account: if~$WH$
is the Weyl group of~$H$ in~$G$, then~$WH$ acts on~$X/H$ and
leaves~$\sigma_{X/H}$ invariant. The relevant extension of~$WH$
by~$\TT$ is given by the Weyl group~$\WW H(j)$ of~$H(j)$ in~$\GG$.

Note that the quotient map~$q:X\rightarrow X/H$ defines a morphism in
the category~$\mathcal G$ from~$(X,\sigma_X)$ to~$(X/H,\sigma_{X/H})$.
It has been shown in the previous section that this leads to a
commutative diagram
\begin{center}
  \mbox{
    \xymatrix@C=80pt{%
      \mathcal{U}(X,\sigma_X)\ar[r]^{\mu(X,\sigma_X)}
      &\mathcal{V}(X,\sigma_X) \\
      \mathcal{U}(X/H,\sigma_{X/H}(j))\ar[r]^{\mu(X/H,\sigma_{X/H}(j))}
      \ar[u]^{q^*} 
      & \mathcal{V}(X/H,\sigma_{X/H}(j)).\ar[u]_{q^*}
    }
  }
\end{center}
As a consequence of functoriality, the images of the vertical arrows
live in the~$H(j)$-fixed subspaces. Thus there is a commutative
diagram as above with the top arrow replaced by its restriction to
the~$H(j)$-fixed points. In a Galois covering situation as at hand,
if~$E_{X/H}$ is a vector bundle on~$X/H$ and~$E_X$ is the pullback,
the pullbacks of the sections of~$E_{X/H}$ are exactly
the~$H$-invariant sections of~$E_X$. Thus, with this adaption of the
target, the vertical arrows become isomorphisms:

\begin{prop}
  The vertical arrows in the diagram
  \begin{center}\mbox{
      \xymatrix@C=80pt{%
        \mathcal{U}(X,\sigma_X)^{H(j)}\ar[r]^{\mu(X,\sigma_X)^{H(j)}}
        &\mathcal{V}(X,\sigma_X)^{H(j)} \\
        \mathcal{U}(X/H,\sigma_{X/H}(j))\ar[r]^{\mu(X/H,\sigma_{X/H}(j))}
        \ar[u]^{q^*}
        &\mathcal{V}(X/H,\sigma_{X/H}(j))\ar[u]_{q^*} }}
  \end{center}
  are isomorphisms.
\end{prop}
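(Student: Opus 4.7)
The plan is to verify the claim summand-by-summand, leveraging the hint already made explicit in the paragraph preceding the proposition: for a free Galois covering $q:X\to X/H$, pullback identifies sections of a bundle $E_{X/H}$ on $X/H$ with the $H$-invariant sections of $q^*E_{X/H}$ on $X$. Since both $\mathcal U$ and $\mathcal V$ are defined as direct sums of section spaces, it suffices to check, for each summand, two things: (i) that the underlying bundle on $X$ is the pullback of the corresponding bundle on $X/H$ under the identifications given by $j$, and (ii) that under these identifications the $H(j)$-action on sections coincides with the natural $H$-action on sections of a pullback bundle. Granted these, taking $H(j)$-fixed points on the upstairs side and comparing with the downstairs side gives the asserted isomorphism.

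For the form summands $\Omega^1(L\TT)$, $\Omega^+(L\TT)$, and $\bar\Omega^0(L\TT)$, the bundles involved are naturally associated to the Riemannian manifold structure, and since $q$ is a local isometry they are canonically pullbacks from $X/H$. The circle subgroup $\TT\subseteq\GG$ acts trivially on these, and $H(j)\subseteq\GG$ maps isomorphically to $H$ by definition, so the $H(j)$-action is just the geometric $H$-action by diffeomorphisms. Thus the $H(j)$-fixed subspace is exactly the space of $H$-invariant forms, which by the general fact above coincides with the pullback of the corresponding space on $X/H$.

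For the spinor summands $\Omega^0(W^\pm(\sigma_X))$, the content of the bijection in the previously stated proposition is precisely that the datum $j$ exhibits $\sigma_X$ as the pullback of a complex spin structure $\sigma_{X/H}(j)$ on $X/H$, together with a canonical isomorphism $W^\pm(\sigma_X)\cong q^*W^\pm(\sigma_{X/H}(j))$. Under this identification the action of the lifted subgroup $H(j)\subseteq\GG$ on $\Spinc(X,\sigma_X)$ is exactly the action by deck transformations of the cover $\Spinc(X,\sigma_X)\to\Spinc(X/H,\sigma_{X/H}(j))$, and hence the induced action on sections of $W^\pm(\sigma_X)$ matches the natural $H$-action on sections of a pullback bundle. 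The general fact then again identifies the $H(j)$-fixed sections with sections of $W^\pm(\sigma_{X/H}(j))$.

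The main subtlety, and what I expect to be the only nontrivial point, is keeping the bookkeeping straight for the spinors: one must check that the $H(j)$-action built into the extension~(\ref{GG extension}) really is the one coming from the chosen lift $j$, so that $H(j)$-invariant sections descend to sections of $W^\pm$ built from $\sigma_{X/H}(j)$ rather than from some other compatible quotient. Once this identification is in hand, the pullback of the reference connection $A$ on $L(\sigma_X)$ is the pullback of the connection on $L(\sigma_{X/H}(j))$, so the maps $q^*$ are manifestly compatible with all the structure used to build $\mathcal U$ and $\mathcal V$, and the proposition follows by assembling the summand-wise isomorphisms.
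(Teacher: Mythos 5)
Your proposal is correct and follows essentially the same route as the paper, which leaves the proposition's proof to the remark in the preceding paragraph: in a Galois covering, pullback identifies sections of a bundle on $X/H$ with the $H$-invariant sections of its pullback to $X$. You spell this out summand-by-summand and carefully track that the $H(j)$-action on the spinor bundles is the one determined by the lift $j$, which is exactly the bookkeeping the paper's terse treatment leaves implicit.
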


This proposition identifies the monopole
map~$\mu(X/H,\sigma_{X/H}(j))$ of the quotient with the restriction
of~$\mu(X,\sigma_X)$ to the~$H(j)$-fixed points. One may use the
isomorphisms given by~$q^*$ as in the proposition above to get an
isomorphism
\begin{equation}\label{fixed_point_iso}
  [S^{\Ker(\lambda_{X/H})},S^{\Cok(\lambda_{X/H})}]^{\WW H(j)}
  \stackrel{\cong}{\longrightarrow}
           [S^{\Ker(\lambda_X)^{H(j)}},S^{\Cok(\lambda_X)^{H(j)}}]^{\WW
             H(j)}
\end{equation}
of groups. The previous proposition implies the following structural
result on the Bauer--Furuta invariants.

\begin{thm}\label{thm:fixed point restriction}
  The~$H(j)$-fixed point map
  \begin{equation}
    [S^{\Ker(\lambda_X)},S^{\Cok(\lambda_X)}]^{\GG}
    \longrightarrow
        [S^{\Ker(\lambda_X)^{H(j)}},S^{\Cok(\lambda_X)^{H(j)}}]^{\WW
          H(j)}
  \end{equation}
  sends the equivariant invariant~$m_G(X,\sigma_X)$ of $X$ to a class
  which can be identified with the
  invariant~$m_{WH}(X/H,\sigma_{X/H}(j))$ of the quotient~$X/H$ by
  means of the isomorphism~{\upshape (\ref{fixed_point_iso})}.
\end{thm}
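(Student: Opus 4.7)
The plan is to combine two ingredients: the preceding Proposition, which identifies the restriction of $\mu(X,\sigma_X)$ to $H(j)$-fixed points with $\mu(X/H,\sigma_{X/H}(j))$ via the pullback $q^{*}$, and the functoriality of the Bauer--Furuta finite-dimensional approximation with respect to passage to fixed points.

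First I would unpack both sides of the purported equality. By definition, $m_G(X,\sigma_X)$ is the stable homotopy class obtained by applying the Bauer--Furuta construction of \cite{BauerFuruta} to the $\GG$-equivariant monopole map $\mu(X,\sigma_X)$ with its linearisation $\lambda_X$. The image under the $H(j)$-fixed point map is by construction obtained by taking $H(j)$-fixed points of a representing finite-dimensional approximation; since $H(j) \leqslant \GG$ is normalised by its $\GG$-normaliser, the fixed points carry an action of $\WW H(j) = N_{\GG}(H(j))/H(j)$, which is the relevant extension of $WH$ by $\TT$. On the other hand, $m_{WH}(X/H,\sigma_{X/H}(j))$ is obtained by applying the same construction to $\mu(X/H,\sigma_{X/H}(j))$ with linearisation $\lambda_{X/H}$.

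Next I would verify that the Bauer--Furuta construction commutes with fixed points. Concretely, the construction chooses finite-dimensional $\GG$-invariant subspaces $\mathcal{U}' \subset \mathcal{U}(X,\sigma_X)$ containing $\Ker(\lambda_X)$, together with the induced $\mathcal{V}' \supset \lambda_X(\mathcal{U}')$, and forms a stable map of Thom spaces. Choosing these subspaces so that they are also $H(j)$-stable (which is automatic since they are $\GG$-stable), one can pass to $H(j)$-fixed points; because $H(j)$ acts through isometries and the index of $\lambda_X^{H(j)}$ agrees with that of the linearisation downstairs, the resulting data is precisely a Bauer--Furuta approximation for the restricted map. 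The point is that $(-)^{H(j)}$ is an exact functor on $\GG$-Hilbert spaces with isometric actions and preserves the compactness and boundedness hypotheses in \cite{BauerFuruta}. This yields a general statement: for any $\GG$-equivariant map $\mu$ satisfying the Bauer--Furuta hypotheses, the class of $\mu^{H(j)}$ in $\WW H(j)$-equivariant stable homotopy represents the image of $[\mu]$ under the fixed-point map.

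I would then combine these two observations. The preceding Proposition gives, via the isomorphisms $q^{*}$, an identification of $\mu(X,\sigma_X)^{H(j)}$ with $\mu(X/H,\sigma_{X/H}(j))$ as $\WW H(j)$-equivariant maps. Applying the functoriality of finite-dimensional approximation through this isomorphism converts the image of $m_G(X,\sigma_X)$ under the $H(j)$-fixed point map into the class $m_{WH}(X/H,\sigma_{X/H}(j))$ under the isomorphism \textup{(\ref{fixed_point_iso})}, which is exactly what is claimed.

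The main obstacle is the second step: carefully checking that the Bauer--Furuta construction commutes with passage to $H(j)$-fixed points, including the verification that the fixed points of a cofinal system of approximating $\GG$-representations are cofinal among the approximating $\WW H(j)$-representations needed downstairs, and that the stabilisations used to define the equivariant stable homotopy classes match up. Once this functoriality is in place, the identification is a direct consequence of the preceding Proposition.
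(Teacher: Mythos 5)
Your proof takes the same route as the paper: it combines the preceding Proposition identifying $\mu(X/H,\sigma_{X/H}(j))$ with the restriction of $\mu(X,\sigma_X)$ to $H(j)$-fixed points, and the compatibility of the Bauer--Furuta finite-dimensional approximation with passage to fixed points. The paper leaves the latter step implicit, simply asserting that the Proposition "implies" the theorem, whereas you correctly identify it as the point requiring verification and sketch it; the content is otherwise identical.
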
  

This theorem has an interesting consequence. Given a 4-manifold~$Y$
with finite fundamental group~$G$, let~$X$ be a universal covering
of~$Y$. This will be a Galois covering of~$Y$ with group~$G$. For each
complex spin structure~$\sigma_Y$ on~$Y$, the pullback~$\sigma_X$ of
this to~$X$ will be $G$-equivariant. The theorem above implies
that~$m(Y,\sigma_Y)$ can be obtained from the equivariant
invariant~$m_G(X,\sigma_X)$ as the restriction to the $G$-fixed
points.

\begin{cor}\label{finite fundamental groups}
  The information of the ordinary Bauer--Furuta invariants of~4-manifolds
  with finite fundamental group is contained in the equivariant Bauer--Furuta
  invariants of their universal coverings.
\end{cor}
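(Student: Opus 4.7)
The plan is to deduce the corollary directly from Theorem~\ref{thm:fixed point restriction} in the special case $H=G$, so the work is mostly a matter of unpacking the set-up.

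First, I would establish the framework. Since $Y$ is a closed 4-manifold with finite fundamental group $G$, its universal cover $X\to Y$ is a free $G$-Galois covering with $X$ again a closed connected 4-manifold, and one can equip $X$ with the pulled-back Riemannian metric so that $G$ acts by orientation preserving isometries. Given any complex spin structure $\sigma_Y$ on $Y$, the pullback $\sigma_X=q^*\sigma_Y$ along $q:X\to Y$ is a complex spin structure on $X$ whose $\Spinc(4)$-bundle carries a tautological lift of the deck action. This tautological lift is precisely an element $j\in J_G(\sigma_X)$ in the sense of the proposition preceding Section~3.1, and the corresponding subgroup $G(j)\leqslant\GG$ maps isomorphically onto $G$, with $\sigma_{X/G}(j)=\sigma_Y$ by construction.

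Next, I would apply Theorem~\ref{thm:fixed point restriction} with $H=G$ and this particular $j$. Because $H=G$, the Weyl group $WH$ is trivial, and hence $\WW H(j)$, the Weyl group of $G(j)$ in $\GG$, collapses to the kernel $\TT$ of $\GG\to G$. Theorem~\ref{thm:fixed point restriction} therefore states that the $G(j)$-fixed point map
\[
[S^{\Ker(\lambda_X)},S^{\Cok(\lambda_X)}]^{\GG}\longrightarrow[S^{\Ker(\lambda_X)^{G(j)}},S^{\Cok(\lambda_X)^{G(j)}}]^{\TT}
\]
sends the equivariant invariant $m_G(X,\sigma_X)$ to a class which, via the isomorphism~(\ref{fixed_point_iso}), is the invariant $m_{WG}(Y,\sigma_Y)$; and since $WG$ is trivial, this $\TT$-equivariant class is just the ordinary Bauer--Furuta invariant $m(Y,\sigma_Y)$.

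Finally, to justify the plural ``invariants'' in the statement, I would note that the assignment $\sigma_Y\mapsto(\sigma_X,j)$ is injective: by the bijection between the sets in~(1) and~(3) of the preceding proposition, distinct complex spin structures on $Y$ yield distinct $G$-equivariant complex spin structures on $X$. Consequently, the entire family of ordinary invariants $\{m(Y,\sigma_Y)\}_{\sigma_Y}$ is read off from the corresponding family of equivariant invariants $\{m_G(X,\sigma_X)\}$ of the universal cover by passage to $G(j)$-fixed points. There is no substantive obstacle in this argument; the only points requiring care are the identification of the Weyl groups in the case $H=G$ and the observation that a pulled-back complex spin structure carries a canonical $G$-equivariant structure, both of which are direct from the definitions.
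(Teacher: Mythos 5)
Your argument is correct and is essentially the paper's own: the paper deduces the corollary from Theorem~\ref{thm:fixed point restriction} in the case $H=G$, noting that pulling back $\sigma_Y$ along the universal covering gives a $G$-equivariant complex spin structure on $X$, so that $m(Y,\sigma_Y)$ is recovered as the $G$-fixed-point restriction of $m_G(X,\sigma_X)$. Your additional observations---that $WG$ is trivial so $\WW G(j)$ collapses to $\TT$, and that $\sigma_Y\mapsto(\sigma_X,j)$ is injective so the whole family is recovered---are correct and merely spell out details the paper leaves implicit.
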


\subsection{The ghost map -- first version}

In Corollary \ref{finite fundamental groups}, the point of view of~$Y$
has been emphasised, answering the question how invariants
of~$(Y,\sigma_Y)$ can be interpreted in terms of invariants
of~$(X,\sigma_X)$. Now one may also take the point of view of~$X$ and
try to understand the information which is contained in its
equivariant invariants.

Up to this point, maps out of the group~$[S^{\Ker(\lambda_X)},
S^{\Cok(\lambda_X)}]^{\GG}$ in two directions have been considered. On
the one hand, in the foregoing section, it has been noted that the
forgetful map
\begin{displaymath}
  [S^{\Ker(\lambda_X)},
    S^{\Cok(\lambda_X)}]^{\GG} \longrightarrow
  [S^{\Ker(\lambda_X)},
    S^{\Cok(\lambda_X)}]^{\TT}
\end{displaymath}
maps~$m_G(X,\sigma_X)$ to~$m(X,\sigma_X)$. In fact, the image of the
forgetful map lies in the $G$-invariants, so that we might even
consider this as map
\begin{equation}\label{invariant_map}
  [S^{\Ker(\lambda_X)}, S^{\Cok(\lambda_X)}]^{\GG} \longrightarrow \H^0(G;
    [S^{\Ker(\lambda_X)}, S^{\Cok(\lambda_X)}]^{\TT}).
\end{equation}
On the other hand, in this section, the relevance of the fixed point
maps has been shown. These maps took the form
\begin{displaymath}
  [S^{\Ker(\lambda_X)}, S^{\Cok(\lambda_X)}]^{\GG}
  \longrightarrow [S^{\Ker(\lambda_X)^{H(j)}},
    S^{\Cok(\lambda_X)^{H(j)}}]^{\WW H(j)}
\end{displaymath}
for a subgroup~$H$ of~$G$ and an element~$j$ in~$J_H(\sigma_X)$. They can be
composed with a forgetful map like (\ref{invariant_map}) to get a map
\begin{displaymath}
  [S^{\Ker(\lambda_X)}, S^{\Cok(\lambda_X)}]^{\GG}
  \longrightarrow \H^0(WH; [S^{\Ker(\lambda_X)^{H(j)}},
    S^{\Cok(\lambda_X)^{H(j)}}]^{\TT}).
\end{displaymath}
The image of the element~$m_G(X,\sigma_X)$ under this composition has
be identified with the ordinary invariant~$m(X/H,\sigma_{X/H}(j))$ of
the quotient~$X/H$ with respect to the complex spin structure
corresponding to~$j$. Notice that the map~(\ref{invariant_map}) is
just the case where~$H$ is the trivial subgroup.

Let us now put all this information together in one map. One can sum over
the~$j$ in~$J_H(\sigma_X)$ to obtain a map
\begin{displaymath}
  [S^{\Ker(\lambda_X)}, S^{\Cok(\lambda_X)}]^{\GG}
  \rightarrow\bigoplus_j\H^0(WH;
                  [S^{\Ker(\lambda_X)^{H(j)}},
                    S^{\Cok(\lambda_X)^{H(j)}}]^{\TT})
\end{displaymath} 
for each subgroup~$H$ of~$G$. If~$H_1$ and~$H_2$ are conjugate
subgroups of~$G$, their Weyl groups are isomorphic and there is a
diffeomorphism between~$X/H_1$ and~$X/H_2$ which respects the actions
of the Weyl groups. Therefore, it is not necessary to consider all
subgroups~$H$ of~$G$, but only a set of representatives of the
conjugacy classes~$(H)$. The product
\begin{equation}\label{old ghost map}
  [S^{\Ker(\lambda_X)}, S^{\Cok(\lambda_X)}]^{\GG}
  \!\!\rightarrow\!\!\bigoplus_{(H),j}\!\H^0(WH;
                  [S^{\Ker(\lambda_X)^{H(j)}},
                    S^{\Cok(\lambda_X)^{H(j)}}]^{\TT})
\end{equation}
of the above maps over such a set will be referred to as the {\it
  ghost map}. As is apparent from the interpretation given for the
terms involved, it codifies the relationship between the equivariant
and the ordinary invariants. Therefore, one would like to understand
it as well as possible. This is the aim of the rest of this text.

\subsection{The ghost map -- second version}

The following proposition gives a translation from the ghost map
(\ref{old ghost map}) to something which has smaller groups of equivariance:
if~$G$ is finite, they will be finite, too.

\begin{prop}\label{prop:finite version of ghost map}
  Assume that the positivity conditions {\upshape
    (\ref{eq:first_condition})} and {\upshape
    (\ref{eq:second_condition})} are fulfilled, such
  that~$[S^{\Ker(\lambda_X)}, S^{\Cok(\lambda_X)}]^{\GG}\cong[\Cset
  P(V)_+,S^{W-1}]^G$ as in Proposition~\ref{reduction}. Then the ghost
  map~{\upshape (\ref{old ghost map})} is isomorphic to a map
  \begin{equation}\label{new ghost map}
    [\Cset P(V)_+,S^{W-1}]^G\longrightarrow\bigoplus_{(H)}\H^0(WH;[\Cset
    P(V)^H_+,S^{W^H-1}]).
  \end{equation}
\end{prop}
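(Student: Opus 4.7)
The plan is to apply Proposition~\ref{reduction} at each level of the ghost map~(\ref{old ghost map}) -- once to its source and once to each summand of its target -- and then reassemble the summands indexed by $j \in J_H(\sigma_X)$ into the single fixed point space $\Cset P(V)^H$ for each conjugacy class~$(H)$.

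For the source, Proposition~\ref{reduction} directly yields the required isomorphism $[S^{\Ker(\lambda_X)}, S^{\Cok(\lambda_X)}]^{\GG} \cong [\Cset P(V)_+, S^{W-1}]^G$ under the hypothesised positivity conditions. For each summand of the target, indexed by a pair~$(H, j)$, I would combine the fixed point identification~(\ref{fixed_point_iso}) with the fact that $\WW H(j)$-equivariant stable maps are the $WH$-invariants of their $\TT$-equivariant counterparts, to recognise the summand as
\begin{displaymath}
[S^{\Ker(\lambda_{X/H})}, S^{\Cok(\lambda_{X/H})}]^{\WW H(j)}.
\end{displaymath}
Proposition~\ref{reduction} now applies to the quotient $(X/H, \sigma_{X/H}(j))$ with symmetry group $\WW H(j)$: the index identity $\ind_{\WW H(j)}(D_{A,X/H}) = [V^{H(j)}]$ in $\RO(\WW H(j))$ holds because sections on $X/H$ pull back to $H(j)$-invariant sections on $X$; the identity $\H^+(X/H) = W^H$ is standard; and the positivity of $W^G$ forces positivity of $(W^H)^{WH} = W^{N_G(H)}$. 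The output is an isomorphism between this summand and $[\Cset P(V^{H(j)})_+, S^{W^H-1}]^{WH}$.

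To reassemble the pieces, I would use the $WH$-equivariant decomposition $\Cset P(V)^H \cong \coprod_{j \in J_H(\sigma_X)} \Cset P(V^{H(j)})$: a point $[v] \in \Cset P(V)$ is $H$-fixed exactly when some lift of~$H$ to~$\GG$ acts on~$v$ by a character $\chi\colon H \to \TT$, and such characters are parametrised by the $\Hom(H, \TT)$-torsor $J_H(\sigma_X)$. The resulting wedge decomposition $\Cset P(V)^H_+ \cong \bigvee_j \Cset P(V^{H(j)})_+$ becomes a direct sum after mapping into $S^{W^H-1}$, so that the assembled target coincides with $\H^0(WH; [\Cset P(V)^H_+, S^{W^H-1}])$, summed over conjugacy classes $(H)$. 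Naturality of Proposition~\ref{reduction} in the subgroup and the splitting data then converts the old ghost map into the new one.

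The main obstacle I expect is the bookkeeping of the $WH$-action on $\coprod_j \Cset P(V^{H(j)})$: this action permutes the components according to the $WH$-action on $J_H(\sigma_X)$, so individual summands are not $WH$-stable in general, and the Weyl groups $\WW H(j)$ only cover the stabiliser of~$j$ in~$WH$. Reconciling this permutation of summands with the formulation of the old ghost map as $\bigoplus_j \H^0(WH; \cdots)$, and verifying that the resulting square commutes, is the technical heart of the argument.
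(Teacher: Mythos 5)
Your overall strategy is indeed the paper's: apply Proposition~\ref{reduction} at both ends, use the decomposition of $\Cset P(V)^H$ from Proposition~\ref{prop:fixed_points_of_CP}, and reassemble. However, the middle of your argument does not hold. You identify the $(H,j)$-summand of the old target, namely $\H^0(WH;[S^{\Ker(\lambda_X)^{H(j)}},S^{\Cok(\lambda_X)^{H(j)}}]^{\TT})$, with the equivariant group $[S^{\Ker(\lambda_{X/H})},S^{\Cok(\lambda_{X/H})}]^{\WW H(j)}$, invoking the supposed fact that $\WW H(j)$-equivariant stable maps are the $WH$-invariants of their $\TT$-equivariant counterparts. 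That is false: there is only a forgetful map $[M,N]^{\WW H(j)}\to\H^0(WH;[M,N]^{\TT})$, and it is neither injective nor surjective in general -- measuring its failure to be an isomorphism is exactly what the ghost map is for, and Theorems~\ref{thm:hkr} and~\ref{thm:not injective} describe that failure. The same conflation recurs at the end, when you equate $[\Cset P(V)^H_+,S^{W^H-1}]^{WH}$ with $\H^0(WH;[\Cset P(V)^H_+,S^{W^H-1}])$; again these are related only by a non-invertible forgetful map. Since your chain traverses one such map backwards and the other forwards, the claimed isomorphism of targets is not actually constructed. The paper's proof makes neither identification: it applies Proposition~\ref{reduction} with \emph{trivial} group to obtain an isomorphism of $WH$-modules $[\Cset P(V^{H(j)})_+,S^{W^{H(j)}-1}]\cong[S^{V^{H(j)}},S^{W^{H(j)}}]^{\TT}$, applies $\H^0(WH;-)$ to that, and sums over $j$. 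The groups of $\WW H(j)$- and $WH$-equivariant maps appear only as the middle column of a commutative rectangle, connected to the $\H^0$-groups by forgetful maps that are never claimed to be isomorphisms.

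On your closing worry about the $WH$-action permuting the pieces $\Cset P(V^{H(j)})$: the paper states, in the remark after Proposition~\ref{prop:fixed_points_of_CP}, that the summands are $WH$-invariant, so that bookkeeping does not arise; the same invariance is presupposed by the $j$-indexed sum in~(\ref{old ghost map}) and by casting $\WW H(j)$ as an extension of $WH$ by $\TT$. This amounts to $N_{\GG}(H(j))$ surjecting onto $N_G(H)$, which is indeed the content your instinct says needs checking; the paper treats it as given, and it is automatic in the prime-order applications that follow.
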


The map~(\ref{new ghost map}) is built similar to the ghost map:
in each factor, first restrict to the fixed points, then forget the
equivariance.

The proof of~Proposition~\ref{prop:finite version of ghost map} starts
with a remark on the indexing. At first sight, it seems that the
indexing over~$j$ has disappeared in~(\ref{new ghost map}), but the
components of the fixed point set~$\Cset P(V)^H_+$ are indexed
by~$J_H(\sigma_X)$:

\begin{prop}\label{prop:fixed_points_of_CP}
  Let~$H$ be a subgroup of~$G$. Then
  \begin{displaymath} 
    \Cset P(V)^H=\coprod_{j \in J_H(\sigma_X)}\Cset P(V^{H(j)}).
  \end{displaymath} 
  If~$J_H(\sigma_X)$ is empty, this means that~$\Cset P(V)^H$ is empty
  as well.
\end{prop}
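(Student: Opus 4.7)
The plan is to analyse the stabiliser in $\GG$ of a lift $v \in V \setminus 0$ of an $H$-fixed class $[v] \in \Cset P(V)^H$ and to match it, via the preceding proposition, with an element of $J_H(\sigma_X)$.

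First I would note that $\Cset P(V)$ is the quotient of $V \setminus 0$ by the free scalar action of $\TT \leqslant \GG$, so the $G$-action on $\Cset P(V)$ is induced from the $\GG$-action on $V$, and $[v]$ is fixed by $h \in H$ if and only if some lift $\tilde h \in \GG$ of $h$ satisfies $\tilde h v = v$. Consequently, for any $[v] \in \Cset P(V)^H$, the stabiliser $S(v) := \operatorname{Stab}_\GG(v)$ maps onto $H$ under the projection $\GG \to G$; since $\TT$ acts freely on $V \setminus 0$, this projection restricts to an injection on $S(v)$, and so $S(v)$ maps isomorphically onto $H$. Via the bijection (1)$\leftrightarrow$(2) of the previous proposition, $S(v)$ corresponds to a unique $j = j(v) \in J_H(\sigma_X)$ with $H(j) = S(v)$, and then $v \in V^{H(j)}$ by construction, so $[v] \in \Cset P(V^{H(j)})$.

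Conversely, for any $j \in J_H(\sigma_X)$ the linear subspace $V^{H(j)} \subseteq V$ is $H(j)$-invariant, and because $H(j)$ projects onto $H$ the subset $\Cset P(V^{H(j)}) \subseteq \Cset P(V)$ is contained in $\Cset P(V)^H$. This yields the inclusion $\coprod_{j} \Cset P(V^{H(j)}) \subseteq \Cset P(V)^H$, while the previous paragraph supplies the reverse inclusion; together these give the stated set-theoretic equality.

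The main small obstacle is verifying disjointness of the union. If $[v]$ lies in both $\Cset P(V^{H(j_1)})$ and $\Cset P(V^{H(j_2)})$, then $H(j_1)$ and $H(j_2)$ are both contained in $S(v)$; since each of the three subgroups $H(j_1)$, $H(j_2)$ and $S(v)$ projects isomorphically onto $H$, the inclusions are equalities, so $H(j_1) = S(v) = H(j_2)$, whence $j_1 = j_2$ by the injectivity of (2)$\leftrightarrow$(1). Finally, if $J_H(\sigma_X)$ is empty, then no subgroup of $\GG$ projects isomorphically onto $H$; but any $[v] \in \Cset P(V)^H$ would produce such a subgroup via $S(v)$, so $\Cset P(V)^H$ is empty as well.
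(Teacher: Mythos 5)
The paper states this proposition without proof, so there is no argument there to compare against; your approach is the natural one. But there is a small genuine gap in the write-up: you assert that $S(v) = \operatorname{Stab}_\GG(v)$ ``maps onto $H$'' and therefore, being disjoint from $\TT$, ``maps isomorphically onto $H$''. The first claim is not justified and is false in general: $[v]$ may be fixed by a strictly larger subgroup $K$ with $H \leqslant K \leqslant G$, in which case $S(v)$ projects onto $K$, so $S(v)$ is \emph{not} one of the subgroups $H(j)$, and the definition ``$j(v)$ is the index with $H(j)=S(v)$'' has no meaning. The correct group to consider is $S(v) \cap \pi^{-1}(H)$, where $\pi \colon \GG \to G$ is the projection: it surjects onto $H$ by the lifting argument you give, and injects into $G$ because $\TT$ acts freely on $V \setminus 0$, hence maps isomorphically onto $H$ and defines $j(v)$. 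The same substitution is needed in the disjointness paragraph (there one should argue $H(j_1) = \{s \in S(v) : \pi(s) \in H\} = H(j_2)$, using injectivity of $\pi$ on $S(v)$) and in the final sentence on the empty case. With those replacements the proof is complete; the inclusion $\Cset P(V^{H(j)}) \subseteq \Cset P(V)^H$ is correct as written.
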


Note that the summands~$\Cset P(V^{H(j)})$ are invariant under the
$WH$-action. Therefore, the proposition gives a decomposition
of~$\Cset P(V)^H$ as a~$WH$-space, and enables us to prove
Proposition~\ref{prop:finite version of ghost map}.

\begin{proof}
  As a consequence of Proposition~\ref{prop:fixed_points_of_CP}, and
  since~$H(j)$ acts on~$W$ via~$H$, we have~$W^H=W^{H(j)}$, and
  consequently
  \begin{displaymath}
  [\Cset P(V)^H_+,S^{W^H-1}]^{WH} = \bigoplus_j\,[\Cset
    P(V^{H(j)})_+,S^{W^{H(j)}-1}]^{WH}.
  \end{displaymath}
  Hence, the composition of this identification with
  the sum over~$j$ of the isomorphisms
  \begin{displaymath}
    [\Cset P(V^{H(j)})_+,S^{W^{H(j)}-1}]^{WH} \stackrel{\cong}\longrightarrow
    [S^{V^{H(j)}},S^{W^{H(j)}}]^{\WW H(j)}
  \end{displaymath}
  from Proposition~\ref{reduction} can be used to define the dashed
  isomorphism on the right hand side of the diagram
  \begin{center}
    \mbox{
      \xymatrix@C=80pt{
        [S^V,S^W]^{\GG} \ar[r]^-{\bigoplus_j(?)^{H(j)}} &
        \bigoplus_j[S^{V^{H(j)}},S^{W^{H(j)}}]^{\WW H(j)}
        \\
          [\Cset P(V)_+,S^{W-1}]^G \ar[r]^-{(?)^H}\ar[u]^-{\cong} & 
          [\Cset P(V)^H_+,S^{W^H-1}]^{WH}\ar@{-->}[u]_-{\cong}
    }}
  \end{center}
  so that this diagram commutes. Also, using the dashed arrow on the left hand
  side and another instance of the isomorphism from Proposition~\ref{reduction}
  for the right hand side, the diagram
  \begin{center}
    \mbox{\xymatrix{\bigoplus_j[S^{V^{H(j)}},S^{W^{H(j)}}]^{\WW
          H(j)} \ar[r] &
        \bigoplus_j\H^0(WH;[S^{V^{H(j)}},S^{W^{H(j)}}]^{\TT})
        \\
        [\Cset P(V)^H_+,S^{W^H-1}]^{WH}\ar@{-->}[u]^-{\cong} \ar[r] &
        \H^0(WH;[\Cset P(V)^H_+,S^{W^H-1}])\ar[u]_-{\cong} }}
  \end{center}
  commutes. By placing the two preceding diagrams next to each other
  and summing
  over the~$(H)$ one obtains the result. 
\end{proof}

The rest of this text will be concerned with the ghost map in the
form~(\ref{new ghost map}).

\subsection{Localisation}

Now that the main case of interest has been reduced to equivariant
stable homotopy theory with a finite group of equivariance, one can
use the fact that this theory is easier to understand as soon as the
order of the group is inverted.

\begin{thm}\label{thm:hkr}
  Under the assumptions of Proposition~\ref{prop:finite version of
    ghost map}, the ghost maps~{\upshape (\ref{old ghost map})}
  and~{\upshape (\ref{new ghost map})}, which compare the equivariant
  invariant with the family of the ordinary invariants, become
  isomorphisms after inverting the order of the group: kernel and
  cokernel are finite abelian groups whose order is some power of the
  order of the group.
\end{thm}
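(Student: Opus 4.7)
By Proposition~\ref{prop:finite version of ghost map}, it is enough to prove the claim for the reduced form of the ghost map~(\ref{new ghost map}). The plan is to factor this map, for each conjugacy class~$(H)$ of subgroups of~$G$, as a composite
\[
[\Cset P(V)_+, S^{W-1}]^G \xrightarrow{\;(?)^H\;} [\Cset P(V)^H_+, S^{W^H-1}]^{WH} \longrightarrow \H^0\bigl(WH;\,[\Cset P(V)^H_+, S^{W^H-1}]\bigr)
\]
of a fixed-point map and a forgetful map, and to prove that each of the two is an isomorphism after inverting~$|G|$; summing over~$(H)$ then gives the result.

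For the first arrow, I would invoke the tom Dieck splitting applied to stable maps between finite based $G$-CW complexes: summing fixed-point maps over a set of representatives of conjugacy classes of subgroups of~$G$ yields a homomorphism
\[
[A, B]^G \longrightarrow \bigoplus_{(H)} [A^H, B^H]^{WH}
\]
whose kernel and cokernel are annihilated by a power of~$|G|$. Equivalently, this may be deduced from the fact that the table-of-marks homomorphism of the Burnside ring, $A(G)\to\prod_{(H)}\Zset$, becomes an isomorphism after inverting~$|G|$, combined with the natural $A(G)$-module structure on $[A,B]^G$ and the compatibility of this structure with the geometric fixed-point functors $\Phi^H$. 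Since $\Phi^H$ sends $\Sigma^\infty_G A$ to $\Sigma^\infty A^H$ for finite $G$-CW complexes, one recovers exactly the fixed-point map on stable homotopy classes.

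For the second arrow, a standard transfer argument shows that for any finite group~$\Gamma$ and any finite based $\Gamma$-CW complexes, the forgetful map $[A,B]^\Gamma \to \H^0(\Gamma;[A,B])$ becomes an isomorphism after inverting $|\Gamma|$: the composite of restriction and transfer in either order is multiplication by $|\Gamma|$. Applied to $\Gamma=WH$, whose order divides $|G|$, this gives the claim. Combining the two steps, and noting that both source and target of~(\ref{new ghost map}) are finitely generated abelian groups because the complexes involved are finite, one concludes that kernel and cokernel of the ghost map are both finitely generated and annihilated by a power of~$|G|$, hence finite of $|G|$-power order. The main obstacle is the first step: invoking tom Dieck's splitting in this concrete form requires care in identifying the $A(G)$-module structure and in verifying that geometric fixed points agree with categorical fixed points on the finite $G$-CW complexes at hand. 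Once that is in place, the transfer argument for the second map is classical.
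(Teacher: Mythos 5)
Your factorisation of the ghost map into a fixed-point map followed by a forgetful map is the correct description of how the map is built, but the central claim -- that each of the two becomes an isomorphism after inverting $|G|$ -- is false, and this is a genuine gap, not a technicality. Already the test case $A=B=S^0$ with $G=\Zset/p$ shows the problem. There $[A,B]^G$ is the Burnside ring $A(\Zset/p)$, which has rank $2$; the target of your first map is $\bigoplus_{(H)}[A^H,B^H]^{WH}\cong A(\Zset/p)\oplus\Zset$, which has rank $3$, so the first map has infinite cokernel; and the forgetful map $A(\Zset/p)=[S^0,S^0]^G\to\H^0(G;\Zset)\cong\Zset$ has infinite kernel, generated by $[G/1]-p[G/G]$. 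Both defects are free abelian, so no inversion of integers can repair either step individually. The transfer identity you invoke for the second step is also stated incorrectly: in genuine equivariant stable homotopy theory, $\mathrm{tr}\circ\mathrm{res}$ is multiplication by the Burnside class $[\Gamma]\in A(\Gamma)$, not by the integer $|\Gamma|$; and $[\Gamma]$ is a zero divisor (its mark at any non-trivial subgroup is zero), hence remains non-invertible after inverting $|\Gamma|$. The argument you have in mind is the one valid for naive $\Gamma$-spectra (or Bredon cohomology), where the Burnside ring action factors through the augmentation, but not here.

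What is true -- and what the theorem asserts -- is that the composite of the two maps is an isomorphism after inverting $|G|$: the infinite cokernel of the first step and the infinite kernel of the second cancel against each other, as the rank count in the example above shows (the matrix of the composite is $\begin{pmatrix}1&p\\1&0\end{pmatrix}$, with determinant $-p$). This cancellation is precisely what the Burnside-ring idempotent decomposition encodes: $A(G)[1/|G|]\cong\prod_{(H)}\Zset[1/|G|]$, and the corresponding idempotent piece $e_{(H)}[A,B]^G[1/|G|]$ is identified with $\H^0(WH;[A^H,B^H])[1/|G|]$ in one step, using geometric fixed points and the Segal-type analysis. This is exactly Lemma~3.6 of Hopkins--Kuhn--Ravenel (or Greenlees's induction-theory article), and the paper's proof simply quotes that result and then observes that, since the groups involved are finitely generated, ``isomorphism after inverting $|G|$'' translates into ``kernel and cokernel of $|G|$-power order.'' If you want a direct argument you will need to prove the localisation statement for the whole ghost map at once, rather than for its two halves separately.
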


\begin{proof}
  This is a consequence of the following general result: Let~$G$ be a
  finite group,~$M$ and~$N$ two pointed~$G$-CW-complexes,~$M$ finite.
  Consider the \textit{localisation map}
  \begin{equation}\label{hkr}
    [M,N]^G\longrightarrow\bigoplus_{(H)}\H^0(WH;[M^H,N^H])
  \end{equation}
  which is constructed as the ghost map by first passing to fixed
  points and then taking invariants. This is an isomorphism after
  inverting the order of~$G$. (See for example Lemma~3.6 on page 567
  of \cite{HKR}. The reader might also like to have a look at
  \cite{Greenlees:GASS}.) The ghost map is the localisation
  map~(\ref{hkr}) for~\hbox{$M=\Cset P(V)_+$} and~\hbox{$N=S^{W-1}$}.
  The final comment follows since the groups in question are finitely
  generated.
\end{proof}

In other words, away from the group order, the~$G$-equivariant
Bauer--Furuta invariant of~$(X,\sigma_X)$ contains the same information
as all of the ordinary invariants of all of its
quotients~$(X/H,\sigma_{X/H})$ for subgroups~$H$ of~$G$ and complex
spin structures~$\sigma_{X/H}$ on~$X/H$ which pull back to~$\sigma_X$.
In particular, the situation is understood rationally. The following
sections will address the integral and torsion information.


\section{\bf Indices and groups}

Recall from Theorem~\ref{thm:existence} that if~$X$ is a 4-manifold
with an action of a compact Lie group~$G$, and if~$\sigma_X$ is
a~$G$-invariant complex spin structure on~$X$, there is an equivariant
Bauer--Furuta invariant which lives in the
group~\hbox{$[S^{\Ker(\lambda)},S^{\Cok(\lambda)}]^{\GG}$}. Up to
isomorphism, this group depends only on the
class~$[\Ker(\lambda)]-[\Cok(\lambda)]$ in~$\RO(\GG)$. This section
contains a few remarks on these groups, starting with the case of the
ordinary Bauer--Furuta invariants, where~$G=1$ and~$\GG=\TT$, and
finishing with the case where~$G$ is a finite group of Galois
symmetries.

\subsection{The ordinary situation}

The linearisation of the monopole map decomposes as~a sum of the
complex Dirac operator, which sends a spinor~$\phi$ to the
spinor~$D_A(\phi),$ and the real map
\begin{equation}\label{Atiyah complex}
  \Omega^1(L\TT) \rightarrow
  \Omega^+(L\TT)\oplus\bar\Omega^0(L\TT),
\end{equation}
which maps~$a$ to~$(\d^+a,[\d^*a])$. The complex index of the Dirac
operator~is
\begin{displaymath}
  a(\sigma_X)=\frac{c_1^2(\sigma_X)-s}{8}.
\end{displaymath} 
Since the first Betti number is assumed to vanish, the real index of
the operator~(\ref{Atiyah complex}) is~$-b^+$. If we interpret the
index of the linearisation as a~$\TT$-representa\-tion, the circle
acting trivially on real vector spaces and by scalar multiplication on
complex vector spaces, then the $\TT$-index of the linearisation is the
class~\hbox{$a(\sigma_X)[\Cset]-b[\Rset]$} in $\RO(\TT)$, with~$a(\sigma_X)$
as above and~$b=b^+$.  The real virtual dimension of the~$\TT$-index
is given by~$2a(\sigma_X)-b$, which equals
\begin{equation}\label{realindex}
  \frac{c_1^2(\sigma_X)-(2e+3s)}{4}+1.
\end{equation}
The number
\begin{displaymath}
  d(\sigma_X)=\frac{c_1^2(\sigma_X)-(2e+3s)}{4}
\end{displaymath} 
will be called the \emph{degree} of the complex spin structure. This
number vanishes if and only if the complex spin structure comes from
an almost complex structure. From~a different point of view, the
number~$d(\sigma_X)$ is the virtual dimension of the moduli space of
solutions to the Seiberg-Witten equations. The virtual dimension
(\ref{realindex}) of the index is $d(\sigma_X)+1$, since the monopole
map describes the~$\TT$-equivariant situation, which corresponds to
a~$\TT$-space over the moduli space.

The following discussion assumes that~$b\geqslant 2$.
If~$a\leqslant0$, one may reorder to show that the
group~$[S^{\Ker(\lambda)},S^{\Cok(\lambda)}]^\TT$ in question is
isomorphic to the group~$[S^0,S^{b\Rset-a\Cset}]^{\TT}$, where
\hbox{$b\Rset-a\Cset$} is a~$\TT$-representation with~$b$-dimensional
trivial summand. It follows that this group vanishes. From now on, one
can focus on the case where~$a$ is positive. Then one reorders the
index of~$\lambda$ according to real and complex parts and sees that
the group in question is isomorphic to the
group~$[S^{a\Cset},S^{b\Rset}]^{\TT}$.  If~$a\geqslant0$
and~$b\geqslant 2$, there is an isomorphism
\begin{equation}\label{reduction mod TT}
  [S^{a\Cset},S^{b\Rset}]^{\TT} \cong[\Cset P^{a-1}_+,S^{b-1}]
\end{equation}
of groups. (This is Proposition~\ref{reduction} in the case when~$G$
is trivial.) Therefore, one needs to know the structure of the
groups~$[\Cset P^{a-1}_+,S^{b-1}]$. Let us regard~$a$ as fixed and
sort these groups by degree~$d$, using the
relation~\hbox{$b-1=2(a-1)-d$}, so that the groups are~$[\Cset
P^{a-1}_+,S^{2(a-1)-d}]$. These groups are finitely generated and
vanish for negative~$d$. They vanish rationally, except for even
integers~$d$ that satisfy \hbox{$0\leqslant d\leqslant 2(a-1)$}. In
those cases, they have rank~$1$. Apart from divisibility properties of
the Hurewicz image, about which nothing will be said here, the most
interesting information is the structure of the torsion. If~$\ell$
is~a prime, there is no~$\ell$-power torsion in~$[\Cset
P^{a-1}_+,S^{2(a-1)-d}]$ for~$d<2\ell-3$. If~\hbox{$d=2\ell-3$},
the~$\ell$-power torsion in~$[\Cset P^{a-1}_+,S^{2(a-1)-d}]$ is a
group of order~$\ell$ if~$a$ is~a multiple of~$\ell$ and is trivial
else.

\subsection{The equivariant situation}

As a starting point for later computations, it will be useful to have
a more concrete description of the groups appearing in the ghost
map~(\ref{new ghost map}). It will be assumed
that~\hbox{$b=b^+(X/G)\geqslant2$}. This implies $b^+(X/H)\geqslant2$
for all subgroups~$H$ of $G$.

In general, if a group~$G$ acts on the 4-manifold~$X$, one may consider the
equivariant Euler characteristic
\begin{displaymath}
  e_G(X)=[\H^0(X)]-[\H^1(X)]+[\H^2(X)]-[\H^3(X)]+[\H^4(X)]
\end{displaymath} 
and the equivariant signature
\begin{displaymath}
  s_G(X)=[\H^+(X)] -[\H^-(X)]
\end{displaymath} 
of~$X$, which are elements in the representation ring~$\RO(G)$. In the
special case when the group~$G$ is finite and acts freely on~$X$,
these are simply given by~\hbox{$e_G(X)=e(X/G)\cdot[\Rset G]$} and~\hbox{$s_G(X)=s(X/G)\cdot[\Rset G]$}, as follows for example from the
Atiyah-Bott-Lefschetz fixed point formula. Rather than only
in~$e_G(X)$ and~$s_G(X)$, one might also be interested in
the~$G$-subrepresentations~$\H^+(X)$ and~$\H^-(X)$ of~$\H^2(X)$. The
assumption~$b^1=0$ implies that we have the equation~\hbox{$e_G(X)=2[\Rset]+[\H^+(X)]+[\H^-(X)]$}
holds in~$\RO(G)$. It follows that
\begin{displaymath}
  [\H^{\pm}(X)]=\frac{e_G(X)\pm s_G(X)}{2}-[\Rset].
\end{displaymath}
The~$G$-representation~$W=\H^+(X)$ is isomorphic to~$(b+1)\Rset G-1$.
Therefore, one has~\hbox{$S^{W-1} \cong S^{(b+1)\Rset G-2}$}. Starting
from this, one can easily sort out the~$H$-fixed points for the
various subgroups~$H$ of~$G$.

Let now~$\sigma_X$ be a~$G$-invariant complex spin structure on~$X$
such that an actual~$\GG$-representation~$V$ represents
the~$\GG$-index of the Dirac operator. It is not as easy to describe
the~$G$-space~$\Cset P(V)$, since the~$G$-action need not be induced
by a~$G$-action on~$V$. But, more generally, if the restriction
of~$\Cset P(V)$ from~$G$ to a subgroup~$H$ is not induced by
an~$H$-action on~$V$, the fixed point set~$\Cset P(V)^H$ is empty, see
Proposition~\ref{prop:fixed_points_of_CP}. Therefore, these~$H$ do not
contribute to the target of the ghost map. One may therefore assume
that the restriction of~$\Cset P(V)$ from~$G$ to~$H$ is induced by
an~$H$-action on~$V$. The choices of the~$H$-actions are indexed by
the elements~$j$ in~$J_H(\sigma_X)$. The
complex~$H(j)$-representation~$V$ represents the~$H(j)$-equivariant
index of the Dirac operator~$D_X$. The usual indices~$\ind(D_X)$
and~$\ind(D_Y)$ are integers. Since~$D_X$ is~$H(j)$-equivariant, the
equivariant index $\ind_{H(j)}(D_X)$, which by definition
is~\hbox{$[\Ker(D_X)]-[\Cok(D_X)]$}, lives in the complex
representation ring~$\RU(H(j))$. One may deduce that the
equality~$\ind_{H(j)}(D_X)=\ind(D_Y)\cdot[\Cset H(j)]$ holds
in~\hbox{$\RU(H(j))$} as above. Thus,~$V$ is a multiple of the
regular~$H(j)$-representation~$\Cset H(j)$. The multiplicity is the
index~$a(\sigma_{X/H}(j))$ of the complex spin structure on the
quotient~$X/H$ which pulls back to~$\sigma_X$. While this description
seems to depend on~$j$, the~$H$-space~$\Cset P(V)$ does not. Also, the
indices~$a(\sigma_{X/H}(j))$ are independent of~$j$ so that one may
write~$a(\sigma_{X/H})$ unambiguously. The~$H$-fixed point set~$\Cset
P(V)^H$ consists of a disjoint union -- indexed by the
set~$\Hom(H,\TT)$~-- of complex projective spaces of complex
dimension~$a(\sigma_{X/H})-1$, see
Proposition~\ref{prop:fixed_points_of_CP} again.

For use in the following two sections,~I will make explicit what the
previous remarks mean if the order of~$G$ is a prime~$p$.

\subsection{Groups of prime order}

In the case when the order of the group~$G$ is a prime~$p$,
any~$G$-invariant complex spin structure~$\sigma_X$ on~$X$ can be
made~$G$-equivariant. There are~$p$ complex spin structures on the
quotient~$X/G$ which pull back to~$\sigma_X$. Let~$a=a(\sigma_{X/G})$
and~$d=d(\sigma_{X/G})$ be the index and the degree, respectively, of
the corresponding complex spin structures on the quotient~$X/G$.
Recall that~\hbox{$2a-d=b+1$} for~\hbox{$b=b^+(X/G)\geqslant2$}.

\begin{prop}\label{prop:groups for order p} 
  If the order of the group $G$ is a prime number $p$, up to
  isomorphism, the source of the ghost map is
  \begin{displaymath}
    [\Cset P(a\Cset G)_+,S^{(2a-d)\Rset G-2\Rset}]^G.
  \end{displaymath}
  The target is
  \begin{displaymath}
    \H^0(G;[\Cset P^{ap-1}_+,S^{(2a-d)p-2}])\oplus [\Cset
      P^{a-1}_+,S^{2a-d-2}]^{\oplus p},
  \end{displaymath}
  up to isomorphism.
\end{prop}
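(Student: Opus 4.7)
The plan is to deduce this essentially mechanically from Proposition~\ref{prop:finite version of ghost map} (which gives the ghost map in the form $[\Cset P(V)_+,S^{W-1}]^G\rightarrow\bigoplus_{(H)}\H^0(WH;[\Cset P(V)^H_+,S^{W^H-1}])$) together with the explicit descriptions of $V$ and $W$ worked out in the preceding subsection, keeping track of what simplifies when $|G|=p$ is prime.

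First I would identify the source. From the discussion of the equivariant situation, $W=\H^+(X)\cong(b+1)\Rset G-\Rset$ with $b=b^+(X/G)$, so $S^{W-1}\cong S^{(b+1)\Rset G-2\Rset}$, and using $b=2a-d-1$ this is $S^{(2a-d)\Rset G-2\Rset}$. Also, as was recalled just before the statement, the complex $\GG$-representation $V$ representing the index of $D_X$ is the regular representation of $G(j)$ (for any choice of equivariant structure $j$, and $G(j)=\GG$ since the sequence splits for prime order) with multiplicity $a(\sigma_{X/G})=a$, so $V\cong a\Cset G$. Inserting these into Proposition~\ref{prop:finite version of ghost map} gives the source as stated.

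For the target, the key point is that since $|G|=p$ is prime, the only subgroups of $G$ are the trivial group $1$ and $G$ itself, and each forms its own conjugacy class, so the direct sum in~(\ref{new ghost map}) has exactly two summands. For $H=1$ the Weyl group is $WH=G$, the fixed-point set is all of $\Cset P(V)=\Cset P^{ap-1}$, and $W^H-1$ has real dimension $(2a-d)p-2$, yielding the first summand $\H^0(G;[\Cset P^{ap-1}_+,S^{(2a-d)p-2}])$. For $H=G$ the Weyl group is trivial, so the $\H^0$ disappears; the $G$-fixed points of $W$ give $W^G\cong b\Rset$ and hence $S^{W^G-1}=S^{2a-d-2}$; finally, by Proposition~\ref{prop:fixed_points_of_CP} together with the fact that $|J_G(\sigma_X)|=p$ and each $V^{G(j)}$ is $a$-dimensional, $\Cset P(V)^G$ is a disjoint union of $p$ copies of $\Cset P^{a-1}$, producing the second summand $[\Cset P^{a-1}_+,S^{2a-d-2}]^{\oplus p}$.

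There is no genuine obstacle here: the statement is really a bookkeeping exercise, and the only places where one needs to be careful are (i) keeping the $\RO(G)$-arithmetic straight, in particular the identity $W\cong(b+1)\Rset G-\Rset$ and its $G$-fixed-point subspace $W^G\cong b\Rset$, and (ii) observing that the splitting of the extension $\GG$ for prime order $p$ means every $G$-invariant $\sigma_X$ is $G$-equivariant, so the index is literally a multiple of the regular representation and no twist appears. Once those points are checked, both sides fall out directly from the general formulas.
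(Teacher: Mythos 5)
Your proposal is correct and recovers exactly the argument that the paper leaves implicit: the proposition is a direct readout of Proposition~\ref{prop:finite version of ghost map} once one plugs in $V\cong a\Cset G$, $W\cong(b+1)\Rset G-\Rset$ with $b+1=2a-d$, notes that a cyclic group of prime order has only the two conjugacy classes of subgroups $\{1\}$ and $G$ with Weyl groups $G$ and $1$ respectively, and invokes Proposition~\ref{prop:fixed_points_of_CP} with $|J_G(\sigma_X)|=p$ to decompose $\Cset P(V)^G$ into $p$ copies of $\Cset P^{a-1}$. One small slip in your parenthetical: $G(j)$ is not all of $\GG$ but rather the split image of $G$ inside $\GG$, i.e.\ a subgroup isomorphic to $G$; this does not affect your conclusion that $V\cong a\Cset G$ as a $G$-representation.
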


In the statement, the~$G$-action on~$[\Cset P^{ap-1}_+,S^{(2a-d)p-2}]$
comes from the identification of that group with~$[\Cset P(a\Cset
G)_+,S^{(2a-d)\Rset G-2\Rset}]$. 

This might be a good point to insert a comment on the structure
of~$G$-modules such as~$[M,N]$ for finite $G$-CW-complexes $M$ and
$N$. The~$G$-action on a group like that is trivial if the group~$G$
acts on~$M$ and~$N$ via homotopically trivial maps. The latter will
always be the case for complex projective spaces~\hbox{$M=\Cset
  P(V)_+$} with a linear action. For spheres~$N=S^W$ it is the case if
and only if~$W$ is orientable, i.e. if~$\det(W)$ is trivial. In
particular, for odd order groups the action is always trivial.

The ghost map in the two cases~$d=0$ and~$d=1$ will be discussed in
more detail in the following two sections. As both phenomena -- a
non-trivial cokernel and a non-trivial kernel -- already occur in
these two examples, it does not seem to be illuminating to proceed and
discuss the cases where~\hbox{$d\geqslant 2$}.


\section{\bf Degree zero}

In this section, some applications of calculations in equivariant
stable homotopy theory to the Bauer--Furuta invariants will be
described in the case when all complex spin structures involved have
degree zero. Since in this case the ordinary Bauer--Furuta invariants
can be identified with the Seiberg-Witten invariants, the results of
this section apply to those as well. As examples, some elliptic
surfaces will be discussed. This will lead to congruences between
certain binomial co-efficients.

\subsection{General results}

Let~$G$ be a finite group of prime order~$p$ which may be even or odd.
Let~$X$ be a~4-manifold with a~$G$-action which preserves a complex
spin structure~$\sigma_X$ on~$X$. In the previous section, a ghost map
has been assembled which sends~$m_G(X,\sigma_X)$ to~$m(X,\sigma_X)$
and the family of the invariants~\hbox{$m(X/G,\sigma_{X/G}(j))$}
for $j$ in~$J_G(\sigma_X)$. Since one has~\hbox{$d(\sigma_X)=p\cdot
  d(\sigma_{X/G}(j))$}, the degree zero case is the case where the
degrees of all the complex spin structures involved are zero. 

Assume as before that~\hbox{$b^+(X/G)\geqslant 2$} holds.
This implies that~\hbox{$a(\sigma_{X/G})\geqslant 2$}. In particular, the
hypotheses of Proposition~\ref{prop:finite version of ghost map} are
fulfilled, so that the ghost map takes the form~(\ref{new ghost map}).
Proposition~\ref{prop:groups for order p} then implies that the target
of the ghost map is isomorphic to a free abelian group of rank~$p+1$:
As regards the first summand, note that~\hbox{$[\Cset
  P^{ap-1}_+,S^{2ap-2}]\cong\Zset$}, and the action of~$G$ on it is
trivial. This is clear for odd~$p$. If~$p=2$ it follows from the fact
that~\hbox{$W\cong a\Cset G-\Cset$} has the structure of a complex
representation, so that the action on the sphere~$S^W$ preserves a
chosen orientation. For the other~$p$ summands, note that there is an isomorphism~$[\Cset
P^{a-1}_+,S^{2a-d-2}]\cong\Zset$.

There is an equivariant stable Hopf theorem, see~\cite{Szymik:Hopf},
which tells us that in this situation the ghost map is injective with
a cokernel of order~$p$, and that the elements in the image are
characterised by a certain congruence. In our situation this reads as
follows.

\begin{thm}\label{thm:relations}
  Assume that~$b^+(X/G)\geqslant2$. In the degree zero case, the
  equivariant invariant~$m_G(X,\sigma_X)$ is determined by the
  ordinary invariants~$m(X,\sigma_X)$ and~$m(X,\sigma_{X/G}(j))$, where~$j$
  ranges over~$J_G(\sigma_X)$. The relation
  \begin{displaymath} 
    m(X,\sigma_X) \equiv \sum_{j \in J_G(\sigma_X)}
    m(X/G,\sigma_{X/G}(j))\;\mod\;p
  \end{displaymath} 
  is satisfied by the latter.
\end{thm}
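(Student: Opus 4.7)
The plan is to identify the source and target of the ghost map~(\ref{new ghost map}) explicitly in the degree-zero case, and then to invoke the equivariant stable Hopf theorem of~\cite{Szymik:Hopf}. Since $d(\sigma_{X/G})=0$ and $b^+(X/G)\geqslant 2$, the identity $2a-d=b^+(X/G)+1$ forces $a:=a(\sigma_{X/G})\geqslant 2$, so the positivity conditions~(\ref{eq:first_condition}) and~(\ref{eq:second_condition}) are satisfied for $G$ and for each of its subgroups. Hence Proposition~\ref{prop:finite version of ghost map} puts the ghost map in the form~(\ref{new ghost map}), and Proposition~\ref{prop:groups for order p} describes its source as $[\Cset P(a\Cset G)_+,S^{W-1}]^G$ and its target as $\H^0(G;[\Cset P^{ap-1}_+,S^{2ap-2}])\oplus[\Cset P^{a-1}_+,S^{2a-2}]^{\oplus p}$.

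Next I would observe that each of the cohomotopy groups $[\Cset P^{ap-1}_+,S^{2ap-2}]$ and $[\Cset P^{a-1}_+,S^{2a-2}]$ is infinite cyclic, detected on the top cell by the Hopf degree, and that the $G$-action on the first summand is trivial: for odd~$p$ this is automatic by the remark following Proposition~\ref{prop:groups for order p}, while for $p=2$ the identification $W-1\cong a\Cset G-\Cset$ endows $S^{W-1}$ with a $G$-invariant orientation. So the target is free abelian of rank~$p+1$ and the ghost map takes the form
\begin{displaymath}
[\Cset P(a\Cset G)_+,S^{W-1}]^G\longrightarrow\Zset\oplus\Zset^{p}.
\end{displaymath}
The key step is then the equivariant stable Hopf theorem of~\cite{Szymik:Hopf}, which in this configuration asserts that this map is injective with cokernel of order $p$, and that its image consists precisely of those tuples $(n_0;n_1,\dots,n_p)$ satisfying $n_0\equiv n_1+\dots+n_p\pmod{p}$.

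Injectivity immediately yields the first claim: $m_G(X,\sigma_X)$ is determined by its image under the ghost map. It remains to identify the coordinates of that image. Theorem~\ref{thm:well-related} matches the coordinate in the $\H^0(G;-)$-summand with $m(X,\sigma_X)$, while Theorem~\ref{thm:fixed point restriction}, combined with the component decomposition of $\Cset P(V)^G$ given by Proposition~\ref{prop:fixed_points_of_CP}, matches the $p$ coordinates in $[\Cset P^{a-1}_+,S^{2a-2}]^{\oplus p}$ with the invariants $m(X/G,\sigma_{X/G}(j))$ for $j\in J_G(\sigma_X)$. Substituting these identifications into the image congruence yields the stated relation. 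The main obstacle is the input from~\cite{Szymik:Hopf}; once this Hopf-type result is granted, the rest of the argument is a direct application of the functoriality results established in Sections~1--3.
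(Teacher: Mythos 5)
Your proposal is correct and follows essentially the same route as the paper: reduce to the form~(\ref{new ghost map}) via Propositions~\ref{prop:finite version of ghost map} and~\ref{prop:groups for order p}, identify the target as $\Zset^{p+1}$ with trivial $G$-action on the first factor (with the $p=2$ orientability check via $W-1\cong a\Cset G-\Cset$), and invoke the equivariant stable Hopf theorem of~\cite{Szymik:Hopf} for injectivity, the cokernel of order~$p$, and the image congruence. You merely spell out more explicitly than the paper the final matching of coordinates to $m(X,\sigma_X)$ and $m(X/G,\sigma_{X/G}(j))$ via Theorems~\ref{thm:well-related} and~\ref{thm:fixed point restriction}, which the paper leaves implicit.
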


As has been shown in~\cite{BauerFuruta}, in the degree zero case, the
Bauer--Furuta invariants can be identified with the integer valued
Seiberg-Witten invariants. Therefore, the latter must satisfy the same
relations. See~\cite{RuanWang} for the case of
involutions. Theorem~\ref{thm:relations} will now be illustrated with
elliptic surfaces, a class of examples where the Seiberg-Witten
invariants are known.
 
\subsection{Elliptic surfaces} 

Relatively minimal regular elliptic surfaces with at most two multiple
fibres will be considered. (For background on elliptic surfaces see the
articles~\cite{Dolgachev} and~\cite{Ue} or the
books~\cite{FriedmanMorgan:Book},~\cite{Friedman:Book}
and~\cite{GompfStipsicz}.) Let~$m_1$ and~$m_2$ denote the
multiplicities of the two fibres~$F^1$ and~$F^2$, respectively.
Setting~$p=\gcd(m_1,m_2)$, the fundamental group of the surface is
cyclic of order~$p$. In particular~$b^1=0$. The other invariants can
be computed from the geometric genus~$p_g$ as follows.  The Euler
characteristic is~\hbox{$e=12(p_g+1)$}. It follows
that~\hbox{$b^2=12p_g+10$}. In fact~\hbox{$b^+=2p_g+1$}
and~\hbox{$b^-=10p_g+9$}.  The signature is given
by~\hbox{$s=-8(p_g+1)$}. The assumption~\hbox{$b^+\geqslant2$} translates
into~$p_g\geqslant1$. In particular, Dolgachev surfaces are not
allowed since they have~$p_g=0$.

Let me abuse (additively written) divisors to denote the corresponding
line bundles, the canonical divisor~$K$ corresponding
to~$\Lambda^2T^*$. For complex surfaces the isomorphism classes of
complex spin structures are canonically para\-metri\-sed by the
isomorphism classes of line bundles, the trivial line bundle
corresponding to a complex spin structure
with~\hbox{$W^+=\Cset\oplus\Lambda^2T$} and~\hbox{$W^-=T$}, having
determinant line bundle $-K$. It follows that~$p_g+1$ is the index of
the Dirac operator for the canonical complex spin structure.

The Seiberg-Witten invariants of the elliptic surfaces have been
computed, see for example \cite{Brussee},
\cite{FriedmanMorgan:BasicClasses} and
\cite{FriedmanMorgan:Multiplicities}. In order to describe the result,
some notation needs to be introduced. For an integer~\hbox{$i\geqslant0$} let~$[i]=\{0,\dots,i\}$. Write~$[i_1,i_2,i_3]$
for~$[i_1]\times[i_2]\times[i_3]$. For~$(a,b,c)$
in~\hbox{$[p_g-1,m_1-1,m_2-1]$}, there is the effective divisor
\begin{displaymath} 
  D(a,b,c)=aF+bF^1+cF^2 
\end{displaymath} 
on the elliptic surface. At the two extremes,~$D(p_g-1,m_1-1,m_2-1)$
is the canonical divisor~$K$, and~$D(0,0,0)$ is the trivial divisor.
Later,~$D(a)$ will be written instead of $D(a,0,0)$. The line bundles
leading to non-trivial Seiberg-Witten invariants are of the
form~\hbox{$K-2D(a,b,c)$} for triples~$(a,b,c)$ in the
cube~\hbox{$[p_g-1,m_1-1,m_2-1]$}. The value of the invariant for the
corresponding complex spin structure is~\hbox{$(-1)^a\binom{p_g-1}{a}$},
independent of~$b$ and~$c$.

Now let us consider a Galois covering situation. The notation~$E(n)$
is used for an elliptic surface with~$p_g=n-1$ and no multiple fibres.
Multiplicities will appear as indices:~$E(n)_{m_1,m_2}$. Note
that~$n\geqslant2$ by our assumption on~$p_g$. The shift from~$p_g$
to~$n$ is justified by the fact that~$n=p_g+1$ behaves well under
coverings. In fact, if~$p$ divides~$m_1$ and~$m_2$, there is a Galois
covering
\begin{displaymath} 
  E(pn)_{m_1/p,m_2/p} \longrightarrow E(n)_{m_1,m_2} 
\end{displaymath} 
with Galois group of order~$p$. If~$p=\gcd(m_1,m_2)$, this is a
universal covering. For example, the universal covering of an Enriques
surface~$E(1)_{2,2}$ is a~K3-surface~$E(2)$; however, the condition~$n
\geqslant 2$ is not satisfied by the Enriques surfaces.  As the
simplest examples,~I would like to discuss the Galois coverings~\hbox{$E(pn)
\rightarrow E(n)_{p,p}$} for~$n\geqslant2$ and any prime number~$p$.

Now the Seiberg-Witten invariants of the covering surface~\hbox{$X=E(pn)$}
take the value~$(-1)^d\binom{pn-2}{d}$ on the
classes~\hbox{$K_X-2D_X(d)$} for~$d$ in~\hbox{$[pn-2]$}. On the
covered surface~$Y=E(n)_{p,p}$ they are given by the
number~$(-1)^a\binom{n-2}{a}$ on the classes~\hbox{$K_Y-2D_Y(a,b,c)$} for a
triple~$(a,b,c)$ in the cube~\hbox{$[n-2,p-1,p-1]$}. Since the
canonical divisor~$K_Y$ pulls back to~$K_X$, the class~$D_Y(a,b,c)$
pulls back to~\hbox{$D_X(pa+b+c)$}. The relations from Theorem~\ref{thm:relations} are now equivalent to a congruence between binomial co-efficients. In order to make these explicit,  let again~$p$ be a prime number, and~$n$ be an integer,~\hbox{$n\geqslant2$}. Then, for any integer~$d$ such that~$0\leqslant d\leqslant pn-2$, the relations are equivalent to the congruence
\begin{equation}\label{elliptic_congrunce} 
    (-1)^d\binom{pn-2}{d} \equiv \sum_{(a,b,c)} 
    (-1)^a\binom{n-2}{a}\;\mod\;p,
\end{equation}  
where the sum ranges over the triples~$(a,b,c)$ in~\hbox{$[n-2,p-1,p-1]$} which satisfy the relation~$pa+b+c=d$. Note that the terms on the right hand side of
(\ref{elliptic_congrunce}) do not depend on~$b$ and~$c$; these enter
only in the summation set. Also, the sets summed over do not always
have~$p$ elements. That reflects the fact that for some of the
pre-images of~$K_X-2D_X(d)$ the Seiberg-Witten invariant vanishes. For
example, this is the case for~$d=0$ and~\hbox{$d=pn-2$}, in other words for
the canonical and the anti-canonical complex spin structures. The reader is encouraged to find her or his own elementary proof of~(\ref{elliptic_congrunce}) so as to verify this instance of Theorem~\ref{thm:relations} by hand.


\section{\bf Degree one}

As in the previous section, let~$G$ be a finite group whose order is a
prime~$p$. Again, calculations in equivariant stable homotopy theory
will be applied to study Bauer--Furuta invariants of Galois
coverings~$X\rightarrow X/G$. This time, however, the complex spin
structures on the quotient~$X/G$ will have degree one. 

\subsection{General results}

If the complex spin structures on the quotient~$X/G$ have degree one,
those on~$X$ consequently have degree~$p$. We will show that the class
of the equivariant invariant~$m_G(X,\sigma_X)$ is in general not
determined by the classes of the ordinary invariants

By Proposition~\ref{prop:groups for order p}, the target of the ghost
map is isomorphic to
\begin{displaymath}
  \H^0(G;[\Cset P(a\Cset G)_+,S^{(2a-1)\Rset G-2\Rset}])\oplus [\Cset
    P^{a-1}_+,S^{2a-3}]^{\oplus p}.
\end{displaymath}
If~$\ell$ is a prime number, the~$\ell$-power torsion of~$[\Cset
P^{a-1}_+,S^{2a-3}]$ is trivial except maybe for~$\ell=2$.
The~$\ell$-power torsion of
\begin{displaymath}
  [\Cset P(a\Cset G)_+,S^{(2a-1)\Rset G-2\Rset}]=[\Cset
    P^{ap-1}_+,S^{(2a-1)p-2}]
\end{displaymath} 
is trivial except maybe if~$2\ell-3\leqslant p$. If~$p\geqslant 5$,
this means that only~$\ell$-power torsion for~$\ell<p$ appears in the
target of the ghost map. This is the main case. Let me briefly comment
on the other two primes~$p=2$ and~$p=3$ before~I return to it in more
detail.

In the case~$p=2$, the~$2$-torsion in~$[\Cset P^{a-1}_+,S^{2a-3}]$
becomes relevant, and the~$G$-action on~$[\Cset P(a\Cset
G)_+,S^{(2a-1)\Rset G-2\Rset}]$ will have to be discussed. The latter
group sits in an extension
\begin{displaymath}
  0\longrightarrow\Zset/2\longrightarrow[\Cset P(a\Cset G)_+,S^{(2a-1)\Rset
  G-2\Rset}]\longrightarrow\Zset\longrightarrow0
\end{displaymath} 
which comes from the Hurewicz map. This time, however, the action
of the group~$G$ on the sphere~\hbox{$S^{(2a-1)\Rset G-2\Rset}$} is not
orientation preserving, so the action on~$\Zset$ is non-trivial.  As a
consequence, the target of the ghost map is isomorphic to~$\Zset/2$
for odd~$a$ and to~$(\Zset/2)^{\oplus 3}$ for even $a$. If~$p=3$, the
target of the ghost map is isomorphic to one copy of~$\Zset/3$ and
maybe some~$2$-torsion.

Now back to the main case. As mentioned above, if~$p\geqslant 5$, the
target of the ghost map is torsion of order away from the group
order~$p$. In particular, by Theorem~\ref{thm:hkr}, the ghost map is
surjective in this case. The kernel is equal to the~$p$-power torsion
in the source~$[\Cset P(a\Cset G)_+,S^{(2a-1)\Rset G-2\Rset}]^G$ of
the ghost map. Calculations with an Adams spectral sequence show,
see~\cite{Szymik:Diss} or~\cite{Szymik:Periodicity}, that the ghost
map is not injective in this case. This means for the Bauer--Furuta
invariants that there are several elements which the ghost map sends
to the collection of the ordinary invariants.

\begin{thm}\label{thm:not injective}
  The homotopy classes of all the ordinary
  invariants~$m(X,\sigma_X)$ and~$m(X/G,\sigma_{X/G}(j))$ for all~$j$
  in~$J_G(\sigma_X)$ do not determine the homotopy class of the
  equivariant invariant~$m_G(X,\sigma_X)$ in general.
\end{thm}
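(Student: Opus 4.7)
The plan is to reduce Theorem~\ref{thm:not injective} to the algebraic statement that the ghost map~(\ref{new ghost map}) fails to be injective in the degree-one case with $p\geqslant5$. Two distinct elements of the source of that map, having the same image, correspond to two \emph{a priori} possible values of the equivariant invariant sharing the same tuple of ordinary invariants, which is exactly the non-determination claimed.

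The first step is to pin down the kernel. Theorem~\ref{thm:hkr} tells me it is $p$-power torsion. The torsion estimate recalled in Section~4 says that the first $\ell$-power torsion in $[\Cset P^{a-1}_+,S^{2(a-1)-d}]$ appears at degree $d=2\ell-3$. Specialising to $\ell=p\geqslant5$ and $d=1$ gives $2p-3\geqslant7>1$, so each of the $p$ summands $[\Cset P^{a-1}_+,S^{2a-3}]$ in the target of the ghost map is $p$-torsion free. The same bound, applied with $ap$ in place of $a$, shows that $[\Cset P^{ap-1}_+,S^{(2a-1)p-2}]$ is also $p$-torsion free, hence so is its group of $G$-invariants. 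The entire target therefore contains no $p$-torsion, so the kernel of the ghost map coincides with the $p$-primary part of the source group $[\Cset P(a\Cset G)_+,S^{(2a-1)\Rset G-2\Rset}]^G$.

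The second step is to show that this $p$-primary part is nonzero for suitable values of the index $a=a(\sigma_{X/G})$. Here I would invoke the $G$-equivariant Adams spectral sequence abutting to these bigraded equivariant stable homotopy groups, as carried out in~\cite{Szymik:Diss} and~\cite{Szymik:Periodicity}. One identifies the $E_2$-page as an Ext-group over an appropriate equivariant Steenrod algebra, locates a $v_1$-periodic class in the relevant stem and filtration, and checks that for dimensional reasons it cannot be killed by any Adams differential. The resulting permanent cycle detects the desired nontrivial $p$-torsion element in the source.

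The main obstacle is precisely this spectral sequence computation. One must identify the relevant Ext groups carefully, account for the fact that the $G$-action on the target sphere $S^{(2a-1)\Rset G-2\Rset}$ need not be orientation preserving (as noted just before the theorem, this was the reason the case $p=2$ had to be separated), and rule out differentials on the candidate class. Once non-injectivity of the ghost map is in place, Theorem~\ref{thm:not injective} follows at once: the class $m_G(X,\sigma_X)$ and any translate of it by a nonzero kernel element have the same image under the ghost map, hence the same tuple of ordinary Bauer--Furuta invariants of $X$ and of the quotients $X/G$, so that tuple cannot in general pin down $m_G(X,\sigma_X)$.
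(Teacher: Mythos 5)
Your proposal follows the paper's own argument almost step for step. The paper, in the paragraphs immediately preceding Theorem~\ref{thm:not injective}, uses exactly the same torsion-degree bound from Section~4 to conclude that for $p\geqslant 5$ the target of the degree-one ghost map has no $p$-torsion, combines this with Theorem~\ref{thm:hkr} to identify the kernel with the $p$-primary torsion of the source $[\Cset P(a\Cset G)_+,S^{(2a-1)\Rset G-2\Rset}]^G$, and then defers the nonvanishing of that torsion to the Adams spectral sequence calculations in~\cite{Szymik:Diss} and~\cite{Szymik:Periodicity}, just as you do. Your concluding reduction (two source elements with the same ghost image give two admissible values of $m_G(X,\sigma_X)$ over the same tuple of ordinary invariants) is also the paper's reading of the non-injectivity. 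One small imprecision: the orientation issue on $S^{(2a-1)\Rset G-2\Rset}$ that you flag as something to "account for" is, as the paper notes, only relevant when $p=2$; for odd $p$ the $G$-action is automatically orientation preserving, so it plays no role in the $p\geqslant 5$ case you are actually running.
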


This raises the question of whether there are 4-manifolds
with~$G$-action reali\-sing the different possibilities opened up by
homotopy theory or not: Do there \hbox{exist}~$(X,\sigma_X)$
and~$(X',\sigma_{X'})$ with free~$G$-actions which have different
equivariant invariants but which have the same ordinary invariants? In
particular, if there is a pair~$(X,\sigma_X)$ with free~$G$-action for
which the ordinary in\-variants~$m(X,\sigma_X)$
and~$m(X/G,\sigma_{X/G}(j))$ are zero for all~$j$, does it follow that
the equivariant invariant~$m_G(X,\sigma_X)$ is zero as well? 
A natural place to look for examples of complex spin structures of degree one is among connected sums of those of degree zero; but, as will be shown in the following final subsection, in that case the invariants will all be determined by non-equivariant data. As at the time of writing there do not seem to be any other relevant examples known which do not fit into this pattern, new geometric constructions seem to be called for to settle this question; Theorem~\ref{thm:not injective} marks the scope of homotopy theory here.

\subsection{Connected sums}

Recall from~\cite{Bauer:Sum} the following. Let~$X_1$ and~$X_2$ be two
4-manifolds with complex spin structures~$\sigma_{X_1}$
and~$\sigma_{X_2}$. Then there is a canonical complex spin
structure~$\sigma_{X_1\#X_2}$ on the connected sum~$X_1\#X_2$ which
restricts to the given one on each summand. The connected sum theorem
says that~$m(X_1\#X_2,\sigma_{X_1\#X_2})$ can be identified with the
smash product~\hbox{$m(X_1,\sigma_{X_1}) \wedge m(X_2,\sigma_{X_2})$}.
More generally, let~$X^{\pm}$ be two 4-manifolds, oriented and
compact, but not necessarily connected. The boundaries~$\partial
X^{\pm}$ are to be identified with a collection~$[-L,L]\times
S^3\times\Lambda$ of necks of length~$2L$.  Here~$\Lambda$ is a finite
index set. Given a permutation~$\tau$ of~$\Lambda$, one may build an
oriented closed 4-manifold~$X^-\cup_{\tau}X^+$ by gluing as indicated
by~$\tau$. This carries a complex spin structure if the~$X^{\pm}$ do
so in a way compatible with the identification over the necks. Then,
if~$\tau_1$ and~$\tau_2$ are even permutations and the first Betti
numbers of~$X^-\cup_{\tau_1}X^+$ and~$X^-\cup_{\tau_2}X^+$ are zero,
there is an~(explicitly described) identification
\begin{equation}\label{identification}
  m(X^-\cup_{\tau_1}X^+) \cong m(X^-\cup_{\tau_2}X^+),
\end{equation}
omitting the evident complex spin structures from the
notation. See~\cite{Bauer:Sum} again.

In order to describe an equivariant extension of this result, let us
assume that~$G$ acts freely on~$X^{\pm}$. (For the matter of this
paragraph,~$G$ can be any finite group.) We will also assume that~$G$
acts on the complex spin structures in a compatible way, so that~$\GG$
is identified with~\hbox{$\TT\times G$} for all components. This time
the boundaries~$\partial X^{\pm}$ are required to be identified with a
collection~\hbox{$G\times [-L,L]\times S^3\times\Lambda$} of necks.
As above, given a permutation~$\tau$ of~$\Lambda$, one may build an
oriented closed~4-manifold~$X^-\cup_{\tau}X^+$ by gluing. This carries
a free $G$-action. Note that an even permutation of~$\Lambda$ induces
an even permutation of~$G\times\Lambda$. Therefore, if the condition
on the first Betti numbers is satisfied, there is an
identification~(\ref{identification}). It is easily checked that the
identification maps and homotopies used in \cite{Bauer:Sum}
are~$G$-equivariant. This implies that also the equivariant
invariants~$m_G(X^-\cup_{\tau_1}X^+)$ and~\hbox{$m_G(X^-\cup_{\tau_2}X^+)$} can be identified. As in the
non-equivariant setting, this leads to results on connected sums, as
will now be exemplified.

Let~$X\rightarrow Y$ be a Galois~$G$-covering. If a complex spin
structure~$\sigma_Y$ on~$Y$ with degree zero is given, the
pullback~$\sigma_X$ on~$X$ has degree zero as well. Again, the
notation $\sigma_Y(j)$ will be used for the different complex spin
structures on~$Y$ which pull back to~$\sigma_X$ on~$X$. Let us choose
an additional 4-manifold~$Z$ and a complex spin structure~$\sigma_Z$
of degree zero. To be on the safe side, let us also assume
that~$b^+\geqslant 2$. (For example,~$Z$ may be taken to be a
K3-surface and~$\sigma_Z$ the canonical spin structure.) For the rest
of this section let us work with these chosen complex spin structures
and their connected sums. If confusion is unlikely, they can be
suppressed from the notation. In the same vein, to improve legibility,
let us write~$Y(j)$ for~$Y$ with the complex spin
structure~$\sigma_Y(j)$ and similarly~$Y(j)\#Z$ to indicate the
relevant complex spin structures on~$Y\#Z$. If~$S^3$ denotes the
separating~3-sphere in~$Y\#Z$, there is an equivariant connected
sum~$X\#_G(G\times Z)$ along~$G\times S^3$, and one may consider
the~$G$-coverings
\begin{equation}\label{sum_covering}
  X\#_G(G\times Z)\longrightarrow Y(j)\#Z.
\end{equation}
Using the connected sum theorem of Bauer, which
identifies~$m(Y(j)\#Z)$ with the smash product~\hbox{$m(Y(j))\wedge
  m(Z)$}, we see that the ordinary invariants of each of the~$Y(j)\#
Z$ can be described in terms which shall be assumed to be known,
namely the ordinary invariants of the~$Y(j)$ and~$Z$. That theorem
also identifies the ordinary invariant~$m(X\#_G(G\times Z))$ with the
smash product~$m(X) \wedge m(Z)^{\wedge p}$. This is zero as soon
as~$p\geqslant5$.  (If~$a\geqslant2$ then every~$m$
in~$[S^{a\Cset},S^{2a-1}]^{\TT}$ satisfies~$m^{\wedge5}=0$.) To sum
up, the ordinary invariants of all the 4-manifolds involved in the
coverings~(\ref{sum_covering}) can be computed from those of~$X$,
the~$Y(j)$ and~$Z$.

Let us turn towards the equivariant invariants. It has been shown in
Theorem~\ref{thm:relations} that the equivariant invariant~$m_G(X)$
of~$X$ is determined by the ordinary invariants of~$X$ and the~$Y(j)$.
Hence the only thing which has not been determined yet is the
equivariant invariant~\hbox{$m_G(X\#_G(G\times Z))$}
of~\hbox{$X\#_G(G\times Z)$}.  Of course, there is again the forgetful
map which sends that to the ordinary invariants~\hbox{$m(X\#_G(G\times Z))$}
and all the~$m(Y(j)\#Z)$. By Theorem~\ref{thm:not injective}, this map
is not injective. So at this point one cannot deduce the equivariant
invariant~\hbox{$m_G(X\#_G(G\times Z))$} from the ordinary invariants.
But it can be deduced from the equivariant invariants $m_G(X)$
and~$m_G(G\times Z)$: using the general remarks on equivariant
connected sums above, there is an identification
\begin{displaymath}
  m_G(X\#_G(G\times Z))\cong m_G(X)\wedge m_G(G\times Z).
\end{displaymath} 
As described in Example~\ref{ex:2}, the equivariant invariant
$m_G(G\times Z)$ of $G\times Z$ is given by
\begin{displaymath}
	m_G(G\times Z)\cong\bigwedge_{g\in G}m(Z). 
\end{displaymath} 
The following result summarises the discussion.

\begin{prop}
	In the Galois covering situation~\eqref{sum_covering}, the equivariant 	Bauer--Furuta invariant is given as	
	\begin{displaymath}
  		m_G(X\#_G(G\times Z))\cong m_G(X)\wedge\bigwedge_{g\in G}m(Z),
	\end{displaymath}
	and~$m_G(X)$ is determined by the ordinary invariants~$m(X)$ and~$m(Y(j))$.
\end{prop}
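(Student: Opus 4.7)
The plan is to derive the formula in two stages: first split the equivariant invariant of the connected sum using the equivariant analogue of Bauer's connected sum theorem, then identify the resulting factor for $G\times Z$ by means of Example~\ref{ex:2}. The remaining assertion is then an immediate application of Theorem~\ref{thm:relations}.

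For the first stage, I would present $X\#_G(G\times Z)$ in the gluing framework recalled just above the statement: set $X^-=X$ and $X^+=G\times Z$, let the necks be the $G$-invariant collection $G\times[-L,L]\times S^3$ (so $\Lambda=G$), and take $\tau$ to be the identity permutation. The canonical complex spin structure on the equivariant connected sum restricts to the given ones on each piece and agrees over the necks, and the $G$-actions are compatible by hypothesis. Since, as the author has just observed, the identification maps and homotopies used in Bauer's gluing argument in \cite{Bauer:Sum} carry through $G$-equivariantly in this setting, one obtains
\[
m_G(X\#_G(G\times Z))\cong m_G(X)\wedge m_G(G\times Z).
\]

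For the second stage, I would invoke Example~\ref{ex:2} applied to the non-connected 4-manifold $G\times Z$ with its permutation action and the product complex spin structure $G\times\sigma_Z$; this yields
\[
m_G(G\times Z)\cong\bigwedge_{g\in G}m(Z).
\]
Substituting produces the displayed formula of the proposition. For the second sentence, note that $\sigma_X$ is the pullback of $\sigma_Y$, and since degree is multiplicative under the $p$-fold Galois covering one has $d(\sigma_X)=p\cdot d(\sigma_Y)=0$. The hypotheses of Theorem~\ref{thm:relations} are thus met, and it gives directly that $m_G(X,\sigma_X)$ is determined by $m(X,\sigma_X)$ together with the collection $\{m(Y(j)):j\in J_G(\sigma_X)\}$, which finishes the argument.

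The only genuinely non-trivial input is the equivariant connected sum identification; once that is in hand, the proposition is a pure assembly of previous results. The main point to check is therefore that each step of the gluing argument of \cite{Bauer:Sum} preserves the $G$-symmetry, which is essentially automatic because $G$ acts freely and the necks as well as the complex spin structures were chosen $G$-invariantly. I do not anticipate any obstacle beyond this verification.
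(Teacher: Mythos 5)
Your proposal is correct and follows the paper's argument exactly: the equivariant connected-sum theorem splits off the $G\times Z$ factor as $m_G(X\#_G(G\times Z))\cong m_G(X)\wedge m_G(G\times Z)$, Example~\ref{ex:2} identifies $m_G(G\times Z)$ with $\bigwedge_{g\in G}m(Z)$, and Theorem~\ref{thm:relations} determines $m_G(X)$ from the ordinary invariants $m(X)$ and $m(Y(j))$. One minor notational slip: in the equivariant gluing framework recalled just before the statement, the $G$-factor is already built into the collection $G\times[-L,L]\times S^3\times\Lambda$ of necks, so for a single equivariant connected sum one should take $\Lambda$ to be a singleton rather than $\Lambda=G$; this does not affect the substance of the argument.
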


To sum up,~(\ref{sum_covering}) is a situation, where an equivariant extension of the connected sum theorem allows one to work around the difficulties posed by the non-injectivity of the ghost map, so that the equivariant invariant can nevertheless be determined from ordinary invariants. It would be interesting to see other examples where this holds (or not).



\vfill

\parbox{\linewidth}{%
Markus Szymik\\
Department of Mathematical Sciences\\
NTNU Norwegian University of Science and Technology\\
7491 Trondheim\\
NORWAY\\
\href{mailto:markus.szymik@ntnu.no}{markus.szymik@ntnu.no}\\
\href{https://folk.ntnu.no/markussz}{folk.ntnu.no/markussz}}

\end{document}